                         %&latex
\documentclass[10pt,a4paper,reqno]{amsart}
\usepackage{amsfonts,amsthm,latexsym,amsmath,amssymb,amscd,amsmath, epsf}%, showlabels}
\usepackage{graphicx, epigraph}
\usepackage[all,cmtip]{xy}

\usepackage{graphicx}
\newtheorem{theorem}{Theorem}
\newtheorem{proposition}[theorem]{Proposition}
\newtheorem{lemma}[theorem]{Lemma}
\newtheorem{definition}{Definition}
\newtheorem{corollary}{Corollary}

\newcommand{\bC}{\mathbb{C}}
\newcommand{\be}{\begin{equation}}
\newcommand{\ee}{\end{equation}}

\newcommand {\Ga}{\Gamma}

\newcommand{\HH}{\mathcal{H}}

\newcommand{\C}{\mathcal{C}}
\newcommand{\CP}{\mathbb{CP}}

\newcommand {\df}{\text{pdef}}
\newcommand{\OO}{\mathcal{O}}

\tolerance=500

\begin{document}
          \numberwithin{equation}{section}

          \title[A note on planarity stratification of Hurwitz spaces]
          {A note on planarity stratification of Hurwitz spaces}

\author[J. Ongaro]{Jared Ongaro}
\address{    Department of Mathematics,
            Stockholm University,
            S-10691, Stockholm, Sweden}
\email{ongaro@math.su.se}

\author[B. Shapiro]{Boris Shapiro}
\address{    Department of Mathematics,           Stockholm University,           S-10691, Stockholm, Sweden}
\email{shapiro@math.su.se}

\begin{abstract} One can easily show that any meromorphic function on a complex closed Riemann surface  can be represented as a composition of a birational map of this surface to $\CP^2$  and  a  projection of the image curve  from an appropriate point $p\in \CP^2$ to the pencil of lines through $p$. We introduce a natural stratification of  Hurwitz spaces according to the minimal degree of a plane curve such that a given meromorphic function can be represented in the above way and calculate the dimensions of these strata. We observe that they     are closely related to a family of Severi varieties studied earlier by J.~Harris, Z.~Ran and I.~Tyomkin.   
\end{abstract}
\maketitle

\section{Basic definitions and facts}  
In what follows we will always work over  the field $\bC$ of complex numbers and by a genus $p_g(\C)$ of a (singular) curve $\C$ we mean its geometric genus, i.e. the genus of its normalization. We start with the following statement. 

\begin{proposition}\label{pr:birational}
Any meromorphic function $f:\C\to \CP^1$ on a complex closed Riemann surface $\C$ can be represented as $f=\pi_p\circ \nu$ where   $\nu: \C \to \CP^2$ is a birational mapping of $\C$ to its image and  $\pi_p: \nu(\C)\to \CP^1$ is the  projection of the image curve $\nu(\C)$ from a point $p\in \CP^2$ to the pencil of lines through $p$. 
\end{proposition}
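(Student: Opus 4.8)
The plan is to reduce the statement to the construction of a single birational plane model of $\C$ whose projective coordinates begin with the pair $(1,f)$: once $\nu=[1:f:g]$ is such a model, projecting from the point $p=[0:0:1]$ simply forgets the last coordinate and returns $[1:f]$, i.e. $f$ itself. Thus everything hinges on producing a function $g$ that upgrades the degree-$d$ map $f$ to a birational map onto a plane curve.

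To find $g$ I would work on the level of function fields. The nonconstant $f$ realizes $\bC(\C)$ as a finite extension of the subfield $f^{*}\bC(\CP^1)\cong \bC(t)$ of degree $d=\deg f$; since we are in characteristic zero this extension is separable, so the primitive element theorem yields an element $g\in\bC(\C)$ with $\bC(\C)=f^{*}\bC(\CP^1)(g)=\bC(f,g)$. Consider the rational map $\nu\colon \C \to \CP^2$, $\nu=[1:f:g]$. Because $\C$ is a smooth complete curve, this rational map extends to a morphism, and since $f$ is nonconstant its image $\nu(\C)$ is an irreducible plane curve. The map $\nu$ is birational onto its image: the induced inclusion of function fields $\bC(\nu(\C))\hookrightarrow \bC(\C)$ has image $\bC(f,g)$, which is all of $\bC(\C)$, so $\nu^{*}$ is an isomorphism and $\nu$ is generically one-to-one. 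I expect this verification — that the chosen $g$ really generates the whole function field, so that $\nu$ is birational and not merely dominant onto a curve it covers nontrivially — to be the only genuine step; it is exactly where the primitive element theorem (hence characteristic zero) is used.

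It then remains to identify the projection. The pencil of lines through $p=[0:0:1]$ is cut out by the forms $a x_0+b x_1$, so the projection $\pi_p\colon \CP^2 \to \CP^1$ sends $[x_0:x_1:x_2]\mapsto[x_0:x_1]$, and therefore $\pi_p\circ\nu=[1:f]=f$, which is precisely the claim. Two routine points finish the argument: when $d=1$ the map $f$ is already birational and one takes $g=f^{2}$, so that $\nu(\C)$ is a conic; and although the center $p$ may a priori lie on $\nu(\C)$ (at the points where $g$ has poles dominating $1$ and $f$), this causes no trouble, since we only ever use the composite $\pi_p\circ\nu$ as a morphism out of the smooth source $\C$, where it extends to $f$ independently of the indeterminacy of $\pi_p$ on the image. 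If one wished to keep $p\notin\nu(\C)$ one would have to constrain the poles of $g$, but this is not needed for the statement as phrased.
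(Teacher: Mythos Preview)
Your proof is correct and follows essentially the same route as the paper: both arguments pass to the function field, invoke the primitive element theorem to find a generator $g$ of $\bC(\C)$ over $\bC(f)$, define $\nu$ via the pair $(f,g)$, and then project along the $g$-coordinate. Your write-up is in fact more explicit about the choice of coordinates, the identification of $p=[0:0:1]$, and the edge cases (the $d=1$ case and the possibility $p\in\nu(\C)$), but the underlying idea is identical.
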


\begin{proof} Let $\bC(\C)$ be the field of meromorphic functions on $\C$.  Consider its subfield $\bC(f)\subset \bC(\C)$ generated by $f$ (i.e. the set of all rational functions of  $f$). Since $\C$ is one-dimensional the field extension 
$\bC(\C):\bC(f)$ is finite.  Choose any meromorphic  function $g: \C\to \CP^1$ generating this extension. Removing a point from $\CP^1$ and its inverse images under $f$ and $g$, we get a birational mapping $\C\setminus \text{ \{finite set\} }  \to \bC^2$ given by the pair $(f,g)$.   
Its compactification gives a birational mapping $\nu:\C\to \CP^2$. Projection "along the second coordinate" gives a presentation  of the original  meromorphic function $f:\C\to \CP^1$   as $f=\pi_p \circ \nu$. 
\end{proof}

Obviously if $\nu$ maps  $\C$ birationally on its image and   $f= \pi_p \circ \nu$ for some point $p\in \CP^2$, then $\deg(\nu(\C))=\deg f$ if and only if $p\notin \nu(\C)$ and $\deg(\nu(C))>\deg f$ if $p\in \nu(\C)$.

\begin{definition}\label{defect}{}
The {\bf planarity defect} $\df(f)$ of a meromorphic function $f:\C\to \CP^1$ equals
$$\df(f):=\min_\nu (\deg(\nu(\C))-\deg (f)$$
such that $f= \pi_p\circ \nu $  as above. 
\end{definition}

We start with the following simple observation. 

\begin{lemma}\label{defectriv}   Given $f: \C\to \CP^1$, then $\df(f)=0$ if and only if $h^0(f^\star(\OO_1))\ge 3$, and  for  almost any  point $p\in \C $ and any other point  $q\neq p$, 
$$h^0(f^\star(\OO(1))-p-q)=h^0(f^\star(\OO(1)))-2.$$
\end{lemma}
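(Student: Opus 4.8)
The plan is to recast the condition $\df(f)=0$ in terms of the line bundle $L:=f^\star(\OO(1))$ (of degree $d:=\deg f$) and its complete linear system, and then to produce the required birational map to $\CP^2$ via the primitive element theorem. First I record two normalizations. Since $f$ is a morphism, the pencil defining it is a \emph{base-point-free} pencil $\langle s_0,s_1\rangle\subseteq H^0(\C,L)$ with $f=[s_0:s_1]$ and $\deg L=d$; in particular $|L|$ is itself base-point-free and defines a morphism $\phi_L\colon\C\to\CP^{r}$ with $r=h^0(L)-1$. The point to isolate is that any net $\langle s_0,s_1,s_2\rangle\subseteq H^0(L)$ with $s_2\notin\langle s_0,s_1\rangle$ gives a morphism $\nu=[s_0:s_1:s_2]\colon\C\to\CP^2$ with $\nu^\star\OO_{\CP^2}(1)=L$, for which $p=[0:0:1]\notin\nu(\C)$ automatically (the pencil $\langle s_0,s_1\rangle$ has no base points) and $f=\pi_p\circ\nu$. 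By the remark following Proposition~\ref{pr:birational}, as soon as such a $\nu$ is birational onto its image, that image has degree exactly $d$, whence $\df(f)=0$. Thus $\df(f)=0$ \emph{if and only if} there exists $s_2\in H^0(L)$ for which $\nu=[s_0:s_1:s_2]$ is birational onto its image, and the whole proof reduces to analysing this last condition. (The degenerate case $d=1$, where $\C\cong\CP^1$ and $h^0(L)=2$, is exceptional and should be excluded or handled separately.)

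Next I translate everything into function fields, exactly as in the proof of Proposition~\ref{pr:birational}. Completing $s_0,s_1$ to a basis $s_0,\dots,s_r$ of $H^0(L)$ and writing $\tilde f=s_1/s_0$ so that $\bC(f)=\bC(\tilde f)$, the map $\nu=[1:\tilde f:g]$ with $g:=s_2/s_0$ is birational onto its image precisely when $g$ generates the extension $\bC(\C):\bC(f)$. Likewise $\phi_L$ is birational onto its image precisely when $\bC(\C)=\bC\bigl(s_1/s_0,\dots,s_r/s_0\bigr)$, i.e. when $\bC(\C)$ is generated over $\bC(f)$ by the ratios $s_i/s_0$. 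So the statement to be proved becomes: there is a single $s_2\in H^0(L)$ with $s_2/s_0$ generating $\bC(\C):\bC(f)$ if and only if $h^0(L)\ge3$ and $\phi_L$ is birational onto its image.

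It then remains to identify condition~(2) with birationality of $\phi_L$. This is the standard separation-of-points criterion: for a point $p$ that is not a base point of $|L|$ one has $h^0(L-p-q)=h^0(L)-2$ exactly when $\phi_L(p)\neq\phi_L(q)$, since two distinct points of $\CP^r$ impose independent conditions on hyperplanes while two coincident images impose only one. Hence the hypothesis ``for almost every $p$ and every $q\neq p$, $h^0(L-p-q)=h^0(L)-2$'' says that the general fibre of $\phi_L$ is a single reduced point, which is exactly birationality of $\phi_L$ onto its image. Condition~(1), $h^0(L)\ge3$, is merely $r\ge2$, i.e. the room needed for a net rather than a pencil.

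Finally I would assemble the equivalence. For the easy direction, if $\df(f)=0$ then the witnessing $\nu$ gives a net in $H^0(L)$ birational onto its image; this forces $s_2\notin\langle s_0,s_1\rangle$ (so $h^0(L)\ge3$) and forces the larger system $|L|\supseteq\langle s_0,s_1,s_2\rangle$ to separate general points, so $\phi_L$ is birational and (2) holds. The substantive direction is the converse, and here the main tool — and the step I expect to be the crux — is the primitive element theorem. Assuming (1) and (2), the extension $\bC(\C):\bC(f)$ is finite and separable (characteristic zero) and is generated by the finitely many ratios $s_i/s_0$, $i\ge2$; therefore a \emph{general} linear combination $s_2:=\sum_{i\ge2}\lambda_i s_i\in H^0(L)$ is primitive, i.e. $g=s_2/s_0$ generates $\bC(\C):\bC(f)$, and the corresponding $\nu$ is birational onto a plane curve of degree $d$. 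The one point demanding care is the bookkeeping with the line bundle: one must keep $s_2$ inside $H^0(L)$ — not an arbitrary meromorphic function, as in Proposition~\ref{pr:birational} — so that $\nu^\star\OO(1)=L$ and the image degree is exactly $d$ rather than something larger. It is precisely this constraint that makes condition~(2), rather than mere finiteness of the extension, the correct hypothesis.
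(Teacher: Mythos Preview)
Your argument is correct and follows the same line as the paper's proof: identify condition~(2) with birationality of the map $\phi_L$ given by the complete linear system $|L|$, and then pass from $\phi_L$ to a birational map to $\CP^2$ by selecting a sub-net containing the pencil $\langle s_0,s_1\rangle$ of $f$. The paper's proof is a two-sentence sketch that simply says ``choosing an appropriate $3$-dimensional subsystem of $f^\star(\OO(1))$ including $f$, we get the required statement,'' without justifying why such a birational sub-net exists and without spelling out the converse; your use of the primitive element theorem (with constants chosen in $\bC\subset\bC(f)$) makes this step explicit, and your treatment of the direction $\df(f)=0\Rightarrow$ (1)+(2), via $\nu^\star\OO_{\CP^2}(1)\cong L$, fills in what the paper leaves implicit.
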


\begin{proof} Indeed, observe that $f$ determines a $1$-dimensional linear subsystem in the complete linear system $f^\star(\OO(1))$. (We count dimensions of linear systems projectively.) Moreover, if $r_f=h^0(f^\star(\OO(1)))\ge 3$ then the system $f^\star(\OO(1))$  defines a map $\phi_f: \C\to \CP^{r_f-1}$  with $r_f-1\ge 2$. If additionally, sections of $f^\star(\OO(1))$ separate each generic point on $\C$ from all other points then $\phi_f$ is birational on the image. The latter condition is   made  explicit  above. Choosing an appropriate $3$-dimensional subsystem of $f^\star(\OO(1))$ including $f$, we get the required statement. 
\end{proof} 

Unfortunately, the second condition is not easy to check in concrete situations, see Remark below.  We say that a linear system $\mathcal L$ on a curve $\C$ is birationally very ample if the image of $\C$ in the projectivized space of its sections is birationally equivalent to $\C$, comp. \cite {Ohb}.

The following sufficient condition of the birational very ampleness of $f^\star(\OO(1))$  is valid.  

\begin{lemma}\label{defec}   If $f: \C\to \CP^1$ has at most one complicated branching point, then $\df(f)=0$ if and only if $h^0(f^\star(\OO(1)))\ge 3$. In particular, under the above assumptions, if   $\deg (f)=d\ge g+2$ where $g$ is the genus of  $\C$  then $\df(f)=0$.
\end{lemma}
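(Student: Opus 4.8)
The plan is to prove the nontrivial implication, that $h^0(f^\star(\OO(1)))\ge 3$ forces $\df(f)=0$; the reverse implication is already contained in Lemma~\ref{defectriv}, and the concluding ``in particular'' then follows at once from Riemann--Roch, since $\deg f=d\ge g+2$ gives $h^0(f^\star(\OO(1)))\ge d-g+1\ge 3$. Write $L=f^\star(\OO(1))$ and $r=h^0(L)$. Assuming $r\ge 3$, the complete linear system $|L|$ is base-point-free (it contains the base-point-free pencil spanned by the two sections defining the morphism $f$), so it defines a morphism $\phi\colon \C\to \CP^{r-1}$ with $r-1\ge 2$, and $f$ is recovered from $\phi$ by the linear projection away from the codimension-two center cut out by that pencil. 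By Lemma~\ref{defectriv} it suffices to show that $\phi$ is birational onto its image: once this is known, a generic three-dimensional subsystem of $|L|$ containing the pencil still maps $\C$ birationally onto a plane curve, and since the pencil is base-point-free the projection center misses $\phi(\C)$, so the point $p$ lies off the image and $\deg\nu(\C)=\deg f$ by the Remark following Proposition~\ref{pr:birational}, whence $\df(f)=0$.

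The heart of the matter is therefore the birational very ampleness of $|L|$, and I would deduce it from the monodromy hypothesis by a group-theoretic argument. Suppose, for contradiction, that $\phi$ is not birational. Since $f$ is a ratio of two coordinate sections we have $\CC(f)\subseteq \CC(\phi)\subsetneq \CC(\C)$, so $\phi$ factors through a covering $\psi\colon \C\to \C'$ of degree $k\ge 2$ with $f=\tilde f\circ\psi$. Equivalently, the monodromy group $G\le \sym_d$ of $f$ (acting on a generic fibre of cardinality $d=\deg f$) is imprimitive: it preserves the partition into the $m=d/k\ge 2$ blocks of size $k\ge 2$ given by the fibres of $\psi$ over the points of $\tilde f^{-1}(t)$.

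Now I would exploit two standard facts about branched covers of $\CP^1$. First, the local monodromies $\sigma_1,\dots,\sigma_n$ around the branch points satisfy $\sigma_1\cdots\sigma_n=1$, and $G=\langle\sigma_1,\dots,\sigma_n\rangle$ is transitive. Second, by hypothesis all but at most one of the $\sigma_i$ are transpositions (this is the meaning of ``at most one complicated branching point''); and a transposition lying in an imprimitive group whose blocks have size $\ge 2$ must exchange two elements of the \emph{same} block, hence acts trivially on the set of blocks. Passing to the induced action $\bar G\le \sym_m$ on the $m$ blocks, every transposition factor becomes the identity, so the product relation collapses to $\bar\sigma_0=1$, where $\sigma_0$ denotes the single possibly complicated monodromy (and $\bar G=\{1\}$ outright if there is none). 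But then $\bar G=\langle\bar\sigma_0\rangle=\{1\}$ cannot act transitively on the $m\ge 2$ blocks, contradicting the transitivity of $G$. Hence no nontrivial block system exists, $\phi$ is birational, and $\df(f)=0$.

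The step I expect to require the most care is the passage between the geometric non-birationality of $\phi$ and the imprimitivity of the monodromy, together with the bookkeeping of block sizes guaranteeing $m\ge 2$ so that transitivity on blocks is genuinely obstructed; the collapse of the product relation is then immediate. One should also fix the precise convention (simple branch point $=$ transposition monodromy) and dispose of the degenerate case $d=1$, but these do not affect the main line of argument.
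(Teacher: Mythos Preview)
Your argument is correct. The outline---reduce to showing $|L|$ is birationally very ample, then derive a contradiction from a nontrivial factorisation $f=\tilde f\circ\psi$---matches the paper's, but you carry out the contradiction via monodromy rather than geometry. The paper lifts $f$ through the normalisation of $\phi_f(\C)$, observes that every branch point of the second factor $\tilde\pi_p$ forces a non-simple branch point of $f$ (since the $k\ge 2$ sheets of $\tilde\phi_f$ over a ramification point of $\tilde\pi_p$ all ramify), and then invokes the fact that a nontrivial connected cover of $\CP^1$ has at least two branch points. Your version translates this into imprimitivity of the monodromy group: transpositions must lie inside a block, so they die in the quotient action on blocks, and the product relation kills the single remaining generator, contradicting transitivity on $m\ge 2$ blocks.

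The two arguments are dual to one another; what your formulation buys is a cleaner handling of the ``at most one complicated branch point'' hypothesis---the product relation absorbs it in one line---whereas the paper must separately note that $\tilde\pi_p$ has at least two branch points. Conversely, the paper's version makes the geometric source of the contradiction (ramification of the projection) more visible. The one point you flag as delicate, namely $m\ge 2$, is exactly what the paper disposes of by remarking that $\phi_f(\C)$ cannot be a line: since $|L|$ is complete of dimension $r-1\ge 2$, the image is nondegenerate in $\CP^{r-1}$, hence of degree $\ge r-1\ge 2$, so $\CC(f)\subsetneq\CC(\phi)$ and $m=[\CC(\phi):\CC(f)]\ge 2$. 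With that check your proof is complete.
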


\begin{proof} As in Lemma~\ref{defectriv}, the necessary condition for $pdef(f)=0$ is $r_f:=h^0(f^\star(\OO(1)))\ge 3$. 
%Consider the effective divisor $(f)_\infty$ and its linear system $f^\star(\OO(1))$. Obviously,   $f^\star(\OO(1))$ is basepoint free since we $f$ is a meromorphic function, i.e. mapping to %$\CP^1$. 
By Riemann-Roch's formula
\begin{equation}\label{eq:RR}
r_f:=h^0(f^\star(\OO(1)))=d-g+1+h^0(K\setminus (f)_\infty),
\end{equation} 
where $(f)_\infty$ is the pole divisor of $f$.   
 The linear system $f^\star(\OO(1))$ determines the mapping $\phi_f: \C\to \CP^{r_f-1}$. Moreover if $r_f\ge 3$ and $f$ has at most one complicated branching point, then $\phi_f$  defines a birational mapping of $\C$ on its image $\phi_f(\C)$.  Indeed, since $r_f\ge 3$ the only thing that we have to exclude is that $\phi_f: \C\to \phi_f(\C)$ is a non-trivial covering. Assume that $\phi_f: \C\to \phi_f(\C)$ is a non-trivial covering. 
%If either the genus of $\phi_f(\C)$ is positive or  $\deg (\phi_f(\C)$ (as a plane curve) is bigger than $1$ then 
Notice that independently of the fact whether  $\phi_f$ is birational on the image or not, $f=\pi_p\circ \phi_f$ where $\pi_p$ is a projection of $\CP^2\setminus p \to \CP^1$ from some point $p\in \CP^2$.  Also the map $f$ can be lifted in the standard way to $f=\tilde \pi_p \circ \tilde \phi_f$ where  $\tilde \phi_f: \C \to \tilde \phi_f(\C)$ is the standard lift of $\phi_f$ to the normalization $\tilde \phi_f(\C)$ of the image $\phi_f(\C)$, and $\tilde \pi_p$ is the composition of the standard map from the normalization $\tilde \phi_f(\C)$ to the image curve $\phi_f(\C)$ with the projection $\pi_p$. Branching points of $f$ are either the images under $\tilde \pi_p$ of the branching points of $\tilde \phi_f$ or the branching points of $\tilde \pi_p$ itself.  But each branching point of $\tilde \pi_p$ is a non-simple branching point of $f$. Contradiction. The case when $\phi_f(\C)$ is a line in $\CP^2$ is obviously impossible due to the dimension of the linear system $f^\star(\OO(1))$. Finally observe that if $d\ge g+2$ then $r_f$ is at least $3$ by Riemann-Roch's formula \eqref{eq:RR}.\end{proof}

\medskip \noindent
{\bf Remark.}  Observe that for $d\ge g+1,$ any curve $\C$ of genus $g$ admits a meromorphic function of degree $d$ with all simple branching points, i.e. the natural map  $\HH_{g,d}\to \mathcal M_g$ where $\mathcal M_g$ is the moduli space of curves of genus $g$ is surjective, see \cite{Se}. Also for $d\ge 2g+1,$ no genericity assumptions whatsoever on $f$ are required for birational ampleness since $f^\star(\OO(1))$ becomes very ample and  defines an embedding $\C\to \CP^{r_f-1}$. However in the interval $g+2\le d\le 2g$ this linear system might define a non-trivial covering on the image as shown by the next classical example, see Proposition 5.3, \cite {Ha}. This circumstance shows that one needs some additional assumption on the branching points to avoid such coverings. 

\medskip \noindent
{\bf Example.} Let $\C$ be a hyperelliptic curve of genus $g>2$ and let $|L|:\C\to \CP^1$ be the
hyperelliptic map. Let $s_0$ and $s_1$ be a basis for $H^0(L)$. Riemann-Roch's formula  gives that
$h^0(gL) = g + 1 < 2g$. Note that there are precisely $\binom{d+n-1}{n-1}$ monomials of degree 
$d$ in $n$ variables.  Therefore there are precisely $d + 1$ monomials of degree $d$ in $s_0$ 
and $s_1$.
The map $|L|:\C\to \CP^1$ is given by 
$$\C \ni p \mapsto [s_0(p):s_1(p)]\in \CP^1,$$  
while the map $|mL|:\C\to \CP^g$ is given by 
$$P\mapsto [s_0(p)^g : s_0(p)^{g-1}s_1(p) : \dots : s_1(p)^g].$$
But it is now clear that $|mL|:\C\to\CP^g$ can be factored as $|L|:\C\to \CP^1$ followed by the Veronese embedding $V : \CP^1 \to \CP^g$. Hence, the image of $\C$ under the map $|mL|$ is a rational normal curve.
Now suppose that $m > g$. Then Riemann-Roch's formula  gives $h^0(mL) = 2m − g + 1 > m+1$. Thus, $s_0$ and $s_1$ only generate a subspace of $H^0(mL)$ and the above argument no longer works (which is good since $|mL|$ determines  a closed embedding).

\medskip 
We now characterize the vanishing of the planarity defect in different terms. Consider the push-forward sheaf $f_\star{(\OO_\C)}$ on $\CP^1$. Since $f$ is a finite map of compact curves,   $f_\star{(\OO_\C)}$ is a vectorbundle on $\CP^1$ whose dimension equals  $\deg(f)$. By the well-known result of Grothendieck,  $f_\star{(\OO_\C)}=\OO\oplus \sum_{i}\OO(a_i)$, where $a_i$ are integers see e.g. \cite{HaMa}. Observe that all $a_i$ must be negative since $h^0(\OO_\C)=h^0(f_\star{(\OO_\C)})=1$.  

\begin{proposition}\label{pr:tyomkin} For any meromorphic function $f:\C\to \CP^1$ with at most one complicated branching point, its planarity defect $\df(f)$ vanishes if and only if $a_{max}=-1$, where  $a_{max}$ is the maximal of all $a_i$'s in the above notation. 
\end{proposition}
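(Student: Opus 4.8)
The plan is to translate the criterion of Lemma~\ref{defec}, namely $h^0(f^\star(\OO(1)))\ge 3$, into a statement about the Grothendieck decomposition of $f_\star(\OO_\C)$ by means of the projection formula. Writing $\mathcal L=\OO(1)$ on $\CP^1$, the first thing I would establish is the identity
\[
h^0(\C,f^\star(\OO(1)))=h^0(\CP^1,f_\star(f^\star\OO(1)))=h^0\bigl(\CP^1,(f_\star\OO_\C)\otimes\OO(1)\bigr).
\]
The left equality holds because $f$ is a finite (hence affine) morphism, so $f_\star$ is exact and the higher direct images vanish, whence it preserves global sections; the right equality is the projection formula $f_\star(f^\star\mathcal L)\cong (f_\star\OO_\C)\otimes\mathcal L$ applied to the line bundle $\mathcal L=\OO(1)$.

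Next I would substitute the Grothendieck splitting $f_\star(\OO_\C)=\OO\oplus\bigoplus_i\OO(a_i)$ recorded just above the statement, with all $a_i<0$, and twist by $\OO(1)$ to get
\[
(f_\star\OO_\C)\otimes\OO(1)=\OO(1)\oplus\bigoplus_i\OO(a_i+1).
\]
Taking global sections on $\CP^1$ and using $h^0(\CP^1,\OO(m))=\max(m+1,0)$, only the summand $\OO(1)$ (contributing $2$) and those summands with $a_i+1=0$, i.e. $a_i=-1$ (each contributing $1$), survive, since $a_i<0$ forces $a_i+1\le 0$. This produces the clean count
\[
h^0(\C,f^\star(\OO(1)))=2+\#\{\,i:a_i=-1\,\}.
\]

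From here the conclusion is immediate. Since every $a_i\le -1$, the maximum $a_{max}$ equals $-1$ precisely when at least one $a_i$ equals $-1$, and the displayed formula shows that this happens exactly when $h^0(f^\star(\OO(1)))\ge 3$. Combining this equivalence with Lemma~\ref{defec}, which under the hypothesis of at most one complicated branching point asserts $\df(f)=0\iff h^0(f^\star(\OO(1)))\ge 3$, yields the desired characterization $\df(f)=0\iff a_{max}=-1$.

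As for the main difficulty, the cohomological bookkeeping is entirely routine once the projection formula is in hand, so there is no genuine obstacle in the computation itself. The only point demanding care is the justification that $f_\star$ commutes with $H^0$ and with the twist by $\OO(1)$; this rests on $f$ being finite (so higher direct images vanish) and on $\OO(1)$ being locally free, both of which hold here. The genericity hypothesis on branching points plays no role in this proposition beyond being inherited through the appeal to Lemma~\ref{defec}.
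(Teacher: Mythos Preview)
Your argument is correct and follows essentially the same route as the paper: reduce to $h^0(f^\star(\OO(1)))\ge 3$ via Lemma~\ref{defec}, then compute $h^0$ by pushing forward with the projection formula and reading off the Grothendieck splitting to obtain $h^0(f^\star(\OO(1)))=2+\#\{i:a_i=-1\}$. Your version is in fact slightly more carefully justified (finiteness of $f$, local freeness of $\OO(1)$) than the paper's.
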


\begin{proof}
Let us  show that under our assumptions $\df(f)=0\leftrightarrow a_{max}=-1$.  We  need to check that $h^0(f^\star(\OO(1))\ge 3$ if and only if $a_{max}=-1$.  Consider $f_\star(f^\star(\OO(1))$. Observe that,  $h^0(f_\star(f^\star(\OO(1)))=
h^0(f^\star(\OO(1)))$ since $f$ is a finite map of compact algebraic curves. Now by projection formula, see Ex. 8.3 in \cite{Ha} 
$$f_\star(f^\star(\OO(1))=\OO(1)\otimes f_\star (\OO_\C)=\OO(1)\oplus \sum_{i}\OO(a_i+1) .$$ 
Since $a_{max}=-1$ then at least one of the terms $\OO(a_i+1)$ equals $\OO$. Therefore $h^0(f_\star(f^\star(\OO(1)))=h^0(\OO(1))+
 \sum_{i}h^0(\OO(a_i+1))\ge 2 +1$. In fact,  $h^0(f_\star(f^\star(\OO(1)))=2+\text{the number of indices  } i \text{  such that } a_i=-1$. 
 \end{proof}

%The next statement is a reformulation of the previous one in terms of the associated Hirzebruch surface, comp. \cite{Pa}, p. 8. 

%\begin{proposition}\label{pr:tyomkinN} For any meromorphic function $f:\C\to \CP^1$ with at most one complicated branching point, %its planarity defect $\df(f)$ can be characterized as follows: 
%Using Hirzebruch surfaces....
%\end{proposition}

%\begin{proof}
%????
%\end{proof}

Proposition~\ref{pr:tyomkin}  shows that there is a connection of the  planarity defect with the slope invariants of meromorphic functions and with the  Maroni strata, comp. \cite {DePa} and \cite {Pa}. In fact, the following statement is true.

\begin{proposition}\label{lm:triv}
Given a meromorphic function $f:\C\to \CP^1$ of degree $d$, its planarity defect $pdef(f)$ equals $d^\prime-d$ where $d^\prime$ is the minimal degree of a linear system $\mathcal L$ such that a) $\mathcal L$ is birationally very ample and b) the (effective) divisor of $f^\star(\OO(1))$ is contained in the (effective) divisor of $\mathcal L$. 
\end{proposition}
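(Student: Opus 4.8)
The plan is to prove the two inequalities $d'-d\le \df(f)$ and $\df(f)\le d'-d$ separately, after first setting up the precise dictionary between birational plane models of $f$ and linear systems. The key geometric bookkeeping, already implicit in the remark following Proposition~\ref{pr:birational}, is this: if $\nu:\C\to\CP^2$ is birational onto its image and $f=\pi_p\circ\nu$, then pulling back the pencil of lines through $p$ produces a pencil of divisors on $\C$ whose \emph{moving} part is exactly the pencil defining $f$ (a sub-pencil of $|f^\star(\OO(1))|$) and whose \emph{fixed} part is the base divisor $B=\nu^{-1}(p)$. Consequently $\nu^\star(\OO(1))=f^\star(\OO(1))\otimes\OO(B)$, and $\deg\nu(\C)-\deg f=\deg B$, this number being the multiplicity of $\nu(\C)$ at the projection center $p$. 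Thus $\df(f)=\min_\nu \deg B$, and the whole proposition amounts to identifying $\deg B$ with $d'-d$.

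For the inequality $d'-d\le\df(f)$, I would start from an optimal model $\nu$ realizing the defect, so $\deg\nu(\C)=d+\df(f)$, and take $\mathcal L:=\nu^\star|\OO(1)|$, the linear system of pullbacks of all lines. Since $\nu$ is a morphism from a smooth curve, $\mathcal L$ is base-point-free, so its degree equals $\deg\nu^\star(\OO(1))=\deg\nu(\C)=d+\df(f)$; and since $\nu$ is birational onto its image, $\mathcal L$ is birationally very ample, giving (a). For (b) I observe that for a line $\ell$ through $p$ the divisor $\nu^\star\ell=B+D_t$ with $D_t\in|f^\star(\OO(1))|$ a fiber of $f$, so the effective divisor $D_t$ of $f^\star(\OO(1))$ is contained in the member $\nu^\star\ell$ of $\mathcal L$. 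Hence this $\mathcal L$ is admissible and $d'\le d+\df(f)$.

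For the reverse inequality $\df(f)\le d'-d$, I would take an optimal system $\mathcal L$ of degree $d'$ satisfying (a) and (b). First I reduce to the base-point-free case: the sub-pencil of $\mathcal L$ coming from $f$ via (b) already has base divisor contained in the fixed divisor $B'$ of degree $d'-d$ determined by $L_{\mathcal L}=f^\star(\OO(1))\otimes\OO(B')$, so removing the fixed part of $\mathcal L$ keeps (b) valid and does not increase the degree; by minimality $\mathcal L$ is base-point-free and $\phi_{\mathcal L}:\C\to X\subset\CP^N$ is birational onto an image $X$ of degree $d'$. Condition (b) exhibits the $f$-pencil inside $\mathcal L$, hence a codimension-two axis $\Lambda\cong\CP^{N-2}$ of the corresponding pencil of hyperplanes, with projection of $X$ from $\Lambda$ equal to $f$ and $\Lambda\cap X=\phi_{\mathcal L}(B')$ a finite set. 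I then choose a general hyperplane $\Sigma\cong\CP^{N-3}$ of $\Lambda$ with $\Sigma\cap X=\varnothing$ and project $\CP^N\dashrightarrow\CP^2$ from $\Sigma$. Because $\Sigma\subset\Lambda$ has codimension one in $\Lambda$, the space $\Lambda$ collapses to a single point $p$, the composition of this projection with $\pi_p$ recovers $f$, and $p$ lies on the plane image with multiplicity $\deg B'=d'-d$; if the projection stays birational the image has degree $d'$, yielding a model with defect $d'-d$ and hence $\df(f)\le d'-d$.

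The main obstacle is precisely the birationality of this constrained projection: I must show that a \emph{general} hyperplane $\Sigma\subset\Lambda$ gives a birational map $X\dashrightarrow\CP^2$, even though $\Sigma$ is not a fully general linear center but is forced to lie in the fixed axis $\Lambda$. The natural route is the classical generic-projection theorem for curves together with a dimension count on secant lines: the secants of $X$ form a two-dimensional family, those meeting the codimension-two space $\Lambda$ form a one-dimensional subfamily cutting $\Lambda$ in a curve, and a general hyperplane of $\Lambda$ meets that curve in only finitely many points, so only finitely many secants pass through $\Sigma$ and the projection is generically one-to-one. This count is clean for $N\ge 4$; the delicate case is $N=3$ (where $\Lambda$ is a line and $\Sigma$ a point), in which one must rule out that $\Lambda$ lies entirely in the locus of centers through which infinitely many secants pass, e.g. the vertex of a quadric cone containing $X$. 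I would handle this either by a genericity argument on the choice of $\Sigma$ within $\Lambda$ or by invoking minimality of $d'$ to exclude a nonbirational projection, and I expect this low-dimensional verification to be the one genuinely technical point of the proof.
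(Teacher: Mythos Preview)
Your argument is correct and follows essentially the same route as the paper, only far more carefully: the paper's proof is a three-line sketch that, in the nontrivial direction, simply picks a function $g$ in the optimal $\mathcal L$ not proportional to $f$ and declares that the pair $(f,g):\C\to\CP^2$ ``gives the required planarity defect''. Your projection from a generic hyperplane $\Sigma\subset\Lambda$ is exactly the coordinate-free version of this choice of $g$, and the reverse inequality $d'-d\le\df(f)$ via $\mathcal L=\nu^\star|\OO(1)|$ is left entirely implicit in the paper. The one point you flag as the main obstacle---birationality of the constrained projection---is real and is precisely what the paper glosses over; your secant-count handles it, and in fact one can also argue it directly from birational very ampleness of $\mathcal L$ (a generic section separates a generic pair in a fiber of $f$), which avoids the low-dimensional case analysis you anticipate.
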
 

\begin{proof}
If $f^\star(\OO(1))$ can serve as $\mathcal L$ then there is nothing to prove.  Otherwise the divisor of $\mathcal L$ must be strictly larger than that of $f^\star(\OO(1))$. In the latter case one can choose a $1$-dimensional linear subsystem of $\mathcal L$ defining a meromorphic function $g: \C\to \CP^1$ which is not proportional to $f$. Consider the map $\psi: \C\to \bC^2$ given by $(f,g)$  and extending it to the map $\tilde \psi:  \C\to \CP^2$ we get the required planarity defect. 
\end{proof}

\subsection {Planarity stratification of small Hurwitz spaces}{}
 
 The small Hurwitz space of degree $d$ functions of genus $g$ curves is defined as: 
$$\HH_{g,d}=\{f: \C\to \CP^1\vert f\; \text{has only simple branched points},  \;\deg f=d\ge 2,\; \text{gen}(\C)=g\ge 0\}.$$
 Recall that $\dim \HH_{g,d}$ equals the number of (simple) branching points of a function from $\HH_{g,d}$ and is given by the formula
$$\dim \HH_{g,d}=2d+2g-2.$$
Small Hurwitz spaces were introduced and studied in substantial details by Clebsch \cite{Cle} and Hurwitz \cite{Hu} at the end of the 19-th century as a tool of investigation of the moduli space $\mathcal M_g$ of genus $g$ curves. 

Proposition~\ref{pr:birational} allows us to introduce the {\it planarity stratification} of $\HH_{g,d}$:
\begin{equation}\label{filtr}
 \HH_{g,d}^{m(g,d)}\subset \HH_{g,d}^{m(g,d)+1} \subset \dots \subset \HH_{g,d}^{M(g,d)}=\HH_{g,d},
\end{equation} 
 where $\HH^l_{g,d}$ consists of all meromorphic functions in $\HH_{g,d}$ whose planarity defect does not exceed $l$. 

We present  some information about this stratification. 

\begin{proposition}\label{pr:filt} For any pair $(g,d)$ where $g\ge 0$ and $d\ge 2$, 
 
\begin{equation}\label{eq:mgd}
m(g,d)=\min_{l\ge 0}\binom{d+l-1}{2}-\binom{l}{2}\ge g.
\end{equation}
which gives 
\begin{equation}\label{eq:mgd1}
m(g,d)=\max{\left(0,\left\lceil \frac{g-\binom{d-1}{2}}{d-1}\right\rceil\right)}.
\end{equation}

\end{proposition}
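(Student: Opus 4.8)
The plan is to translate the computation of $m(g,d)$ into a statement about the least degree of a plane model of a genus $g$ curve, and then to reduce that to an elementary extremal count controlled by the singularity at the projection center. By Proposition~\ref{pr:birational} and Definition~\ref{defect}, a function $f\in\HH_{g,d}$ with $\df(f)=l$ is one that admits a birational model $\nu:\C\to\Gamma\subset\CP^2$ with $\deg\Gamma=d+l$, together with a center $p$ such that $f=\pi_p\circ\nu$. By the observation following Proposition~\ref{pr:birational} the center lies on $\Gamma$ once $l>0$, and since a general line through $p$ meets $\Gamma$ in $d+l$ points, $\operatorname{mult}_p\Gamma$ of them absorbed at $p$, comparing with $\deg f=d$ forces $\operatorname{mult}_p\Gamma=l$. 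Thus $\HH^l_{g,d}\neq\emptyset$ exactly when there is an irreducible plane curve of degree $d+l$ and geometric genus $g$ carrying a point of multiplicity $l$ from which the projection has only simple branch points, and $m(g,d)$ is the least such $l$.

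For the lower bound I would invoke the standard fact that a plane-curve singularity of multiplicity $l$ contributes at least $\binom{l}{2}$ to the difference between arithmetic and geometric genus, with equality precisely for an ordinary $l$-fold point. Hence any model as above satisfies
\[
g=\binom{d+l-1}{2}-\sum_{x}\delta_x\le\binom{d+l-1}{2}-\binom{l}{2},
\]
where $\binom{d+l-1}{2}$ is the arithmetic genus of $\Gamma$ and $\delta_x$ the local delta invariant at $x$; so every $f\in\HH_{g,d}$ obeys $\binom{d+\df(f)-1}{2}-\binom{\df(f)}{2}\ge g$. A short computation gives the identity
\[
\binom{d+l-1}{2}-\binom{l}{2}=\binom{d-1}{2}+(d-1)\,l,
\]
whose right-hand side is strictly increasing in $l$ because $d\ge2$; therefore the inequality $\ge g$ cuts out a half-line $l\ge l_0$ and $m(g,d)\ge l_0$. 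This is already characterization \eqref{eq:mgd}, and solving the linear inequality $\binom{d-1}{2}+(d-1)l\ge g$ for the least nonnegative integer $l$ yields $l_0=\max\bigl(0,\lceil(g-\binom{d-1}{2})/(d-1)\rceil\bigr)$, which is \eqref{eq:mgd1}.

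It remains to attain the bound, i.e. to produce $f\in\HH_{g,d}$ with $\df(f)\le l_0$. Since $\binom{d-1}{2}+(d-1)l_0\ge g$, the aim is a curve of degree $d+l_0$ having a single ordinary $l_0$-fold point at a chosen $p$ together with exactly $\binom{d-1}{2}+(d-1)l_0-g$ further nodes, so that its geometric genus is precisely $g$; projecting from $p$ then gives $\df(f)\le l_0$, and the lower bound upgrades this to $\df(f)=l_0=m(g,d)$. I expect this existence step to be the crux. It has two ingredients: nonemptiness of the family of degree-$(d+l_0)$ plane curves with a prescribed ordinary $l_0$-fold point and a prescribed number of additional nodes --- exactly the Severi-type input linking the present strata to the Severi varieties of Harris, Ran and Tyomkin named in the abstract, where one must verify that the required node count stays within the range these families permit --- and a general-position argument guaranteeing that for a generic member the lines through $p$ tangent to the curve are simple tangents at distinct smooth points, avoiding the nodes and the tangent directions at $p$, so that $\pi_p$ indeed has only simple branch points and lands in $\HH_{g,d}$. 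The case $g\le\binom{d-1}{2}$, where $l_0=0$ and one takes a nodal degree-$d$ plane curve of genus $g$ and projects from a general point off it, is immediate.
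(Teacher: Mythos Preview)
Your argument is correct and follows essentially the same route as the paper: the lower bound comes from the genus drop $\ge\binom{l}{2}$ at a point of multiplicity $l$ (the paper packages this as Lemma~\ref{lm:genus} via iterated blow-ups, you via $\delta$-invariants), and the closed form \eqref{eq:mgd1} falls out of the linear identity $\binom{d+l-1}{2}-\binom{l}{2}=\binom{d-1}{2}+(d-1)l$ exactly as you write. The one substantive difference is in the existence step: rather than appealing abstractly to nonemptiness of the relevant Severi-type locus, the paper gives a concrete degeneration---start from $l$ generic lines through $p$ together with $d$ lines in general position (a genus-$0$ configuration), then deform by a polynomial vanishing to order $l{+}1$ at $p$ to smooth a chosen number of nodes away from $p$---and only then cites \cite{HaMo} and \cite{Ra1} for the details; your invocation of Harris--Ran--Tyomkin is equivalent but less explicit. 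Conversely, you are more careful than the paper in flagging the general-position check needed to ensure the projection from $p$ has only simple branch points, so that the resulting function actually lies in $\HH_{g,d}$.
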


Moreover the following result holds.  

\begin{theorem}\label{th:dim} In the above notation, given $g,d$  and $l\ge m(g,d)$, the stratum $\HH_{g,d}^l$ is irreducible and its dimension is given by: 
\begin{equation}\label{eq:dimen}
\dim \HH_{g,d}^l=\min{(3d+g+2l-4, 2d+2g-2)}. 
\end{equation}
\end{theorem}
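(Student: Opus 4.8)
The plan is to describe the top open stratum $\HH_{g,d}^l\setminus\HH_{g,d}^{l-1}$ of functions with planarity defect \emph{exactly} $l$ as the image of a Severi-type family of plane curves, read off its dimension from the theory of Severi varieties, and then pass to the closure to get $\HH_{g,d}^l$. By Proposition~\ref{pr:birational} and Proposition~\ref{lm:triv}, a function $f\in\HH_{g,d}$ with $\df(f)=l$ is the projection $\pi_p\circ\nu$ of a degree-$(d+l)$ plane curve $C=\nu(\C)$ of geometric genus $g$ from a point $p$ at which $C$ has multiplicity $l$, so that $\deg(\pi_p|_C)=(d+l)-l=d$. For a generic member of the stratum this singular point is an ordinary $l$-fold point and the remaining singularities of $C$ are nodes. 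I would therefore set up the incidence variety
\[
\mathcal I=\{(C,p):\ \deg C=d+l,\ p_g(C)=g,\ p\ \text{an ordinary }l\text{-fold point of }C,\ \text{other sing.\ nodal}\},
\]
and study the natural map $(C,p)\mapsto f=\pi_p\circ\nu$ into $\HH_{g,d}$.

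It is cleanest to blow up the center $p$: on $\mathbb F_1=\mathrm{Bl}_p\CP^2$ the strict transform of $C$ lies in the class $\beta=(d+l)H-lE$, an ordinary $l$-fold point corresponds to transverse intersection with the exceptional curve $E$ in $l$ points, and the projection $\pi_p$ becomes the ruling $\rho\colon\mathbb F_1\to\CP^1$, whose restriction to the strict transform has degree $\beta\cdot(H-E)=d$. Hence the relevant parameter space is exactly the Severi variety $V=V_{\mathbb F_1}(\beta,g)$ of irreducible genus-$g$ curves in class $\beta$, of the type studied by Harris, Ran and Tyomkin. Its expected (and, by their regularity results, actual) dimension is $-K_{\mathbb F_1}\cdot\beta+g-1$; since $-K_{\mathbb F_1}=3H-E$ one gets $-K_{\mathbb F_1}\cdot\beta=3d+2l$, so $\dim V=3d+2l+g-1$. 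Because the target $\CP^1$ of $\rho$ is fixed, no further identification is needed, and the normalization map $V\to\HH_{g,d}$, $\tilde C\mapsto(\widetilde{\C},\rho|_{\tilde C})$, has $3$-dimensional fibres — generically the orbits of the $3$-dimensional group $G_0$ of ruling-preserving automorphisms of $\mathbb F_1$ acting trivially on the base $\CP^1$ (equivalently, the subgroup of $\mathrm{PGL}_3$ fixing $p$ and every line through $p$). Subtracting, the defect-exactly-$l$ locus has dimension $3d+2l+g-4$. Once this number reaches $\dim\HH_{g,d}=2d+2g-2$ the stratum fills out all of $\HH_{g,d}$, which produces the minimum in \eqref{eq:dimen}.

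For irreducibility I would invoke the irreducibility of $V_{\mathbb F_1}(\beta,g)$ (Harris' theorem and its extension to Hirzebruch surfaces by Tyomkin, building on Ran): since $V$ is irreducible and the remaining choices (the group $G_0$ and the choice of $p$) are irreducible, the image stratum is irreducible; as $\dim\HH_{g,d}^{l-1}=3d+2(l-1)+g-4<\dim\HH_{g,d}^l$, the defect-exactly-$l$ locus is dense in $\HH_{g,d}^l$, so the latter is irreducible (in the saturated range one uses instead the classical connectedness of $\HH_{g,d}$ due to Clebsch and Hurwitz). It also remains to confirm that the construction genuinely lands in $\HH_{g,d}$: I would show by a Bertini/general-position argument that for a generic $(C,p)$ the projection $\pi_p$ has only simple branch points — no line through $p$ is bitangent, a flex tangent, or passes through a further singular point — using that $C$ moves in the positive-dimensional family $V$ and that for $l\ge m(g,d)$ the stratum is nonempty by Proposition~\ref{pr:filt}.

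The main obstacle is the input from Severi theory: one must know that $V_{\mathbb F_1}(\beta,g)$ is nonempty, irreducible, and of the expected dimension for \emph{every} $l\ge m(g,d)$ — that is, that the imposed $l$-fold point (equivalently the class $(d+l)H-lE$) is regular and not superabundant. This is precisely where the results of Harris, Ran and Tyomkin enter, and checking that they apply uniformly across the whole range $m(g,d)\le l$, rather than only for $l$ large, is the delicate point; the accompanying transversality statement guaranteeing simple branching, together with the verification that the generic fibre of $V\to\HH_{g,d}$ is a single $3$-dimensional $G_0$-orbit, are the remaining technical steps.
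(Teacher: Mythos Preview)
Your approach is essentially the paper's: pass to the Severi variety on $\Sigma_1=\mathbb F_1$ in class $dL_0+lF$, invoke Tyomkin's irreducibility and expected-dimension theorem (the paper's Theorem~\ref{th:Harris}) to get $\dim V=3d+g+2l-1$, and then subtract the $3$-dimensional fibre of the map to $\HH_{g,d}$. The paper formulates the fibre step cohomologically---Proposition~\ref{pr:final} identifies the tangent space to the fibre with $H^0(N,\OO_N(E))$ for a degree-$(d+2l)$ divisor $E$---and, exactly as you do, flags the verification that this generic fibre is $3$-dimensional (equivalently, a single $G_p$-orbit) as the residual technical point, deferring the general case to forthcoming work.
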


The substantial part of the proof of Theorem~\ref{th:dim} consists of  the following  generalization of the famous result by J.~Harris \cite{Ha} showing that the space of plane curves of genus $g$ and degree $d$ where $g\le \binom{d-1}{2}$ is an irreducible variety whose dense subset consists of nodal curves of genus $g$ (irreducibility of the Severi varieties).  Fixing as above a point $p\in \CP^2$, denote by $S_{g,d,l}$ the variety of reduced irreducible plane curves of degree $d$ having  genus $g$ and  order  $l$ at $p$, where $g\le \binom{d+l-1}{2}-\binom {l}{2}$.  (The order of a plane curve at a given point is the multiplciity of its local intersection at $p$ with  a generic line passing through $p$.) Denote by $W_{g,d,l}\subset S_{g,d,l}$ its subset consisting of curves having an ordinary singularity of order $l$ at $p$ (i.e. transversal intersection of $l$ smooth local branches) and only usual nodes outside $p$.

\begin{theorem}\label{th:Harris} \rm{(1)}  $W_{g,d,l}$ is a smooth manifold of dimensional $3d+g+2l -1$;

 \rm{(2)}  $W_{g,d,l}$ is dense in $S_{g,d,l}$; 

 \rm{(3)}  $S_{g,d,l}$ is irreducible.

\end{theorem}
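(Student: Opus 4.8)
The plan is to adapt Harris's original argument for the irreducibility of Severi varieties to the relative setting where we impose an ordinary singularity of prescribed order $l$ at the fixed point $p$. The three assertions are naturally proved in the order (1), (2), (3), since smoothness and the dimension count for the "nice" locus $W_{g,d,l}$ provide the scaffolding on which the density and irreducibility statements rest.

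\medskip\noindent\textbf{Step 1: Dimension and smoothness of $W_{g,d,l}$.}
First I would compute the dimension by a parameter count. A degree-$d$ plane curve lives in $\PO^N$ with $N=\binom{d+2}{2}-1$. Imposing an ordinary point of order exactly $l$ at the fixed point $p$ is $\binom{l+1}{2}$ linear conditions on the coefficients (vanishing of all partials up to order $l-1$ at $p$), so the family of such curves has dimension $N-\binom{l+1}{2}$. The genus drops from the arithmetic genus $\binom{d-1}{2}$ to $g$; away from $p$ we only allow nodes, and the ordinary $l$-fold point at $p$ contributes $\binom{l}{2}$ to the genus drop. Hence the number of nodes is $\delta = \binom{d-1}{2}-\binom{l}{2}-g$, and each imposed node is one further condition. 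The expected dimension is then
\begin{equation*}
\binom{d+2}{2}-1-\binom{l+1}{2}-\delta
=3d+g+2l-1,
\end{equation*}
after simplification. For smoothness I would invoke the deformation theory of curves with prescribed singularities: at a point of $W_{g,d,l}$ the tangent space is cut out by the conditions that the deformation preserves the $\delta$ nodes and the ordinary $l$-fold point at $p$. The key input is that nodes (and an ordinary multiple point, which is analytically $l$ smooth branches meeting transversally) impose \emph{independent} conditions on the linear system $|\OO_{\CP^2}(d)|$ — equivalently, that the relevant $H^1$ vanishes. This is where I would use that $d$ is large relative to the number and type of singularities, guaranteeing the expected-dimensional, smooth behavior.

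\medskip\noindent\textbf{Step 2: Density of $W_{g,d,l}$ in $S_{g,d,l}$.}
The content here is that a general curve in $S_{g,d,l}$ has \emph{only} nodes away from $p$ and an \emph{ordinary} $l$-fold point at $p$ — i.e. worse singularities can always be smoothed within $S_{g,d,l}$ without changing genus, degree, or order at $p$. I would argue this by the standard "smoothing one node at a time" / versal deformation technique: any more complicated singularity (a tacnode, higher cusp, or a non-ordinary multiple point at $p$) can be deformed to the same genus configuration with only nodes, by a local analysis showing the equigeneric stratum of a worse singularity lies in the closure of the equigeneric stratum of the corresponding collection of nodes. Away from $p$ this is exactly Harris's local smoothing lemma; at $p$ one checks that a non-transversal or higher-order $l$-fold point degenerates from a transversal one, so the ordinary locus is dense among order-$l$ points of fixed genus contribution.

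\medskip\noindent\textbf{Step 3: Irreducibility of $S_{g,d,l}$.}
Given density, it suffices to prove $W_{g,d,l}$ is irreducible, and then $S_{g,d,l}=\overline{W_{g,d,l}}$ is irreducible too. The engine is Harris's monodromy argument, adapted to the relative setting. Consider the incidence variety whose points are (curve in $W_{g,d,l}$, together with an ordering of its $\delta$ nodes); this is an étale cover of $W_{g,d,l}$ of degree $\delta!$, and irreducibility of $W_{g,d,l}$ follows once one shows the monodromy action on the set of nodes is the \emph{full} symmetric group $\mathfrak S_\delta$. I would establish full monodromy by the same degeneration-to-the-boundary technique Harris uses: degenerate to highly reducible curves (unions of lines, or a curve plus a line) where one can explicitly create and transpose nodes, generating transpositions that together generate $\mathfrak S_\delta$. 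The fixed $l$-fold point at $p$ is carried along as a rigid marked feature and does not interfere with the node monodromy, provided one keeps $p$ fixed and degenerates the rest of the curve. \textbf{The hard part} is precisely verifying that the boundary degenerations can be chosen to respect the order-$l$ condition at $p$ — that is, that the limiting reducible curves still carry an ordinary $l$-fold point at $p$ and that one can reach them within $S_{g,d,l}$, so that the monodromy computation genuinely takes place inside the relative Severi variety rather than the absolute one. Once full monodromy is in hand, $W_{g,d,l}$ is connected, and being smooth it is irreducible, completing the proof.
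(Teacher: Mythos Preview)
Your approach is genuinely different from the paper's. The paper does not redo Harris's monodromy argument at all; instead it performs a single blow-up of $\CP^2$ at $p$, obtaining the first Hirzebruch surface $\Sigma_1$, and observes that plane curves of degree $d+l$ with an order-$l$ point at $p$ become, after taking strict transforms, curves in the class $(d+l)L_0-lL_\infty=dL_0+lF$ on $\Sigma_1$. Thus $W_{g,d,l}$ is identified with the Severi variety $V^{irr}_{g,d,l}$ on $\Sigma_1$, and all three assertions follow at once from Tyomkin's theorem on Severi varieties on Hirzebruch surfaces (Theorem~\ref{th:TyoMain}), with Ran's Irreducibility Theorem cited as an alternative. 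This is far more economical than re-running the degeneration/monodromy machinery with the extra constraint at $p$: the blow-up converts the awkward ``fixed singular point of order $l$'' condition into an ordinary divisor-class condition on a surface where the Severi theory is already established. Your direct route is in principle viable---it is essentially what Ran and Tyomkin carry out---but the ``hard part'' you flag (making the boundary degenerations respect the $l$-fold point) is not a minor verification; it is the substantive content of those papers, so your Step~3 is really a citation in disguise rather than an independent argument.

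One computational slip in Step~1: the curves in $S_{g,d,l}$ have degree $d+l$, not $d$ (this is what makes the projection from $p$ have degree $d$). With degree $d$ and your value of $\delta$, the parameter count yields $3d+g-l-1$, not $3d+g+2l-1$; replacing $d$ by $d+l$ in $N$ and in the arithmetic genus repairs the discrepancy.
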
 

The main result of \cite{Ha} is the proof of the same statement in the basic case $l=0$.  Theorem~\ref{th:Harris} follows from already known results of Z.~Ran \cite{Ra1}  and I.~Tyomkin \cite {Tyo}. We first prove  Proposition~\ref{pr:filt} and Theorem~\ref{th:Harris} and then Theorem~\ref{th:dim}. 

\begin{lemma}\label{lm:genus}
The genus of a plane curve decreases by at least $\binom {l}{2}$ by a singularity of order $l$. Moreover the ordinary singularity of order $l$ decreases the genus by exactly $\binom {l}{2}$. 
\end{lemma}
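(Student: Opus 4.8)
The plan is to reduce the statement to the standard fact that the genus drop caused by a singular point $P$ of a reduced plane curve $\C$ equals the $\delta$-invariant $\delta_P=\dim_\bC\bigl(\widetilde{\OO}_{\C,P}/\OO_{\C,P}\bigr)$, where $\widetilde{\OO}_{\C,P}$ is the integral closure of the local ring $\OO_{\C,P}$ in its total ring of fractions. Indeed, the geometric genus of a degree $d$ plane curve equals $\binom{d-1}{2}-\sum_{P}\delta_P$, so each singular point lowers the genus by exactly $\delta_P$, and the lemma amounts to the two assertions $\delta_P\ge\binom{l}{2}$ in general and $\delta_P=\binom{l}{2}$ when the point is ordinary, where $l$ is the order (multiplicity) of $\C$ at $P$.

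For the inequality I would use the behaviour of $\delta$ under a single blow-up. Let $\sigma:\widetilde{\CP^2}\to\CP^2$ be the blow-up at $P$ with exceptional line $E$, and let $\C'$ be the strict transform of $\C$. Since $\C$ has multiplicity $l$ at $P$, and since the geometric genus is a birational invariant while the arithmetic genus of the strict transform drops by $\binom{l}{2}$, the classical reduction formula for the $\delta$-invariant under one blow-up reads
$$\delta_P(\C)=\binom{l}{2}+\sum_{Q\in \C'\cap E}\delta_Q(\C'),$$
the sum being taken over the points of $\C'$ lying over $P$. As every term $\delta_Q(\C')$ is non-negative, this immediately gives $\delta_P\ge\binom{l}{2}$, proving the first assertion. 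Iterating the formula along a resolution recovers the usual multiplicity-sequence expression $\delta_P=\sum_Q\binom{m_Q}{2}$ over all infinitely near points $Q$, but a single step already suffices for the bound.

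For the equality in the ordinary case, suppose $P$ is an ordinary $l$-fold point, i.e.\ $\C$ has $l$ smooth local branches through $P$ meeting pairwise transversally. After blowing up $P$ once these branches are separated: the strict transform $\C'$ meets $E$ in $l$ distinct points, and at each of them $\C'$ is smooth. Hence every $\delta_Q(\C')$ in the displayed formula vanishes and $\delta_P=\binom{l}{2}$, as claimed.

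The only genuinely non-formal ingredient is the single-blow-up reduction formula for $\delta$ (equivalently the proximity/multiplicity-sequence description), and I expect its justification to be the main point, although it is entirely classical. The subtlety to keep in mind is that for a non-ordinary point of multiplicity $l$ the tangent cone may be non-reduced or some branches tangent, so several further blow-ups are needed before resolution; since each contributes a non-negative $\binom{m_Q}{2}$, the inequality $\delta_P\ge\binom{l}{2}$ is preserved, with strict inequality precisely when some infinitely near point again has multiplicity $\ge 2$.
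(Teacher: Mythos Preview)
Your proof is correct and follows essentially the same route as the paper: both arguments blow up the singular point, observe that the first blow-up contributes $\binom{l}{2}$ to the genus drop, and note that any further infinitely near singularities only add further non-negative contributions, with equality precisely when the strict transform is already smooth along the exceptional divisor (the ordinary case). The only difference is packaging: you phrase the iteration via the $\delta$-invariant and the single-blow-up reduction formula $\delta_P=\binom{l}{2}+\sum_Q\delta_Q(\C')$, whereas the paper states the same recursion as an algorithm in words.
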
 

\begin{proof}
The following algorithm describes by which number the genus of a plane curve of degree $d$  is decreased due to a singularity of order $l$. 

Step 1. Subtract  $\binom {l}{2}$ from $\binom {d-1}{2}$.

Step 2. Blow up the singularity in the plane. The strict transform  of the curve will intersect the exceptional divisor  at $l$ points (counting multiplicities). If each of these (geometrically distinct) points is smooth on the strict transform then the genus drops by exactly $\binom {l}{2}$. 

Step 3. If among the latter points there exist singular we have to repeat the previous step, i.e. if the order of singularity is $s$ then we decrease the genus by $\binom {s}{2}$, then we blow up this point etc.  

After finitely many such steps the curve becomes smooth. (Further blow-ups will not change the genus).  
Thus the minimal decrease of genus equals   $\binom {l}{2}$.
\end{proof}

\begin{proof}[Proof of Proposition~\ref{pr:filt}]  The necessity of \eqref{eq:mgd} is obvious. Indeed we need to construct a plane curve of degree $d+l$ such that it has a singularity of order $l$ at $p$ (so that its projection from $p$ will be a covering of degree $d$) and has a genus of normalization equal to $g$. Having a singularity of order $l$ at $p$ decreases the genus by  at least  $\binom {l}{2}$ compared to $\binom {d+l-1}{2}$ which is the genus of a  smooth curve of degree $d+l$, see Lemma~\ref{lm:genus}  above. Thus the inequality \eqref{eq:mgd} must be satisfied. To show that the least value of $l$ satisfying \eqref{eq:mgd} is enough consider first a configuration of $l$ generic lines through $p$ and additionally $d$ lines in $\CP^2$ in general position. This curve has genus $0$. A slight deformation of this curve by a polynomial vanishing up to order $l+1$ at $p$ will resolve all nodes outside $p$ and given $g=  \min_{l\ge 0}\binom{d+l-1}{2}-\binom{l}{2}$. A more careful deformation will resolve any number of nodes between 
$0$ and $\binom {d}{2}$, see the proof of Theorem~\ref{th:Harris} below. 
The classical  case $g\le \binom {d-1}{2}$ is well presented in  \cite{HaMo}, Appendix E and the general case in \cite{Ra1}.
\end{proof} 

We will need some information about the Hirzebruch surfaces and the Severi varieties on them. For a given non-negative integer $n$,  let $\Sigma_n=Proj(\OO_{\CP^1}\oplus \OO_{\CP^1}(n))$ be the Hirzebruch surface and let $\kappa: \Sigma_n \to \CP^1$ be the natural projection. Consider two non-zero sections $(1,0), (0,\sigma)\in H^0(\CP^1, \OO_{\CP^1}\oplus \OO_{\CP^1}(n))$. They define the maps 
$$\CP^1\setminus \mathbb Z(\sigma)\to \Sigma_n,$$
where $ \mathbb Z(\sigma)$ is the zero locus of $\sigma$. We denote the closures of the images of these maps by $L_0$ and $L_\infty$, respectively. (It is clear that the homological class of $L_\infty$ is independent of the choice of $\sigma$.) The following facts are standard. 

\begin{proposition}\label{pr:hirz}
{\rm(i)} 
 The Picard group $Pic(\Sigma_n)$ is a free abelian group generated by the classes $F$ and $L_\infty$, where $F$ denotes the fiber of projection $\kappa$.  (Observe  that $L_0=nF+L_\infty$.) 

\noindent 
 {\rm(ii)}  The intersection form on $Pic(\Sigma_n)$ is given by $F^2=0, L_\infty^2=-n$, and $F\cdot L_\infty=1$. 
 
 \noindent 
 {\rm(iii)} Any effective divisor $M\in Div(\Sigma_n)$ is linearly equivalent to a linear combination of $F$ and $L_\infty$ with non-negative coefficients.  Moreover, if $M$ does not contain $L_\infty$, then it is linearly equivalent to a combination of $F$ and $L_0$ with non-negative coefficients. 
 
 \noindent 
 {\rm(iv)} The canonical class is given by:
 $$K_{\Sigma_n}=-(2L_\infty+(2+n)F)=-(L_0+L_\infty+2F).$$
 
  \noindent 
 {\rm(v)} Any smooth curve $\C$ with the class $dL_0+kF$ has genus $g(\C)=\frac{(d-1)(dn+2k-2)}{2}$. 

\end{proposition}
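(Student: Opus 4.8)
All five assertions are classical; the plan is to first pin down the intersection-theoretic backbone in {\rm(i)} and {\rm(ii)}, and then to deduce {\rm(iii)}--{\rm(v)} formally, by testing effective classes against nef classes and by invoking the adjunction formula.

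For {\rm(i)}, I would invoke the standard description of the Picard group of a projective bundle: for $\kappa:\Sigma_n=\mathbb{P}(\OO_{\CP^1}\oplus\OO_{\CP^1}(n))\to\CP^1$ one has $Pic(\Sigma_n)=\kappa^\star Pic(\CP^1)\oplus\ZZ\zeta$, where $\zeta$ is the relative hyperplane class. Since $Pic(\CP^1)=\ZZ$ is generated by a point whose pullback is the fiber class $F$, and the second summand is generated by a section, $Pic(\Sigma_n)$ is free on $F$ and $L_\infty$. The two sections $L_0$ and $L_\infty$ come from the two summands $\OO$ and $\OO(n)$, and their difference is the pullback of the degree-$n$ twist, which yields the relation $L_0=nF+L_\infty$. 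For {\rm(ii)}, $F^2=0$ since two distinct fibers are disjoint while $F$ moves; $F\cdot L_\infty=1$ since $L_\infty$ is a section meeting each fiber transversally in a single point; and $L_\infty^2=\deg N_{L_\infty/\Sigma_n}=-n$ because the normal bundle of the negative section is $\OO_{\CP^1}(-n)$.

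For {\rm(iii)}, writing an effective class as $M\equiv aF+bL_\infty$, I would test it against the two base-point-free (hence nef) classes $F$ and $L_0$. Since $|F|$ is a base-point-free pencil, $M\cdot F\ge 0$ for effective $M$, and the computation $M\cdot F=b$ yields $b\ge 0$; likewise $L_0$ is base-point free, so $M\cdot L_0=a\ge 0$. If moreover $M$ does not contain $L_\infty$ as a component, then $M\cdot L_\infty=a-bn\ge 0$, and rewriting $M=(a-bn)F+bL_0$ via $L_\infty=L_0-nF$ exhibits it as a non-negative combination of $F$ and $L_0$.

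Finally {\rm(iv)} and {\rm(v)} are pure adjunction. Writing $K_{\Sigma_n}=\al F+\bet L_\infty$, adjunction on the rational curve $F$ (with $F^2=0$) forces $K_{\Sigma_n}\cdot F=-2$, so $\bet=-2$, while adjunction on $L_\infty\cong\CP^1$ (with $L_\infty^2=-n$) gives $K_{\Sigma_n}\cdot L_\infty=n-2$, so $\al-\bet n=n-2$ and $\al=-(n+2)$; substituting $L_0=nF+L_\infty$ then rewrites this as $-(L_0+L_\infty+2F)$. For {\rm(v)}, using $L_0^2=n$, $L_0\cdot F=1$, $F^2=0$ I compute $\C^2=d^2n+2dk$ for $\C\equiv dL_0+kF$, read off $K_{\Sigma_n}\cdot\C$ from {\rm(iv)}, and substitute into $2g(\C)-2=\C^2+K_{\Sigma_n}\cdot\C$; a routine simplification yields $2g(\C)=(d-1)(dn+2k-2)$, which is the claimed formula. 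The only step requiring more than formal bookkeeping is {\rm(i)} together with $L_\infty^2=-n$ in {\rm(ii)}: establishing that the Picard group is exactly free of rank two and computing the self-intersection of the negative section rely on the projective-bundle theorem and the identification of the normal bundle of the section, after which everything else is a consequence of the four intersection numbers.
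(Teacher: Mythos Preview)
The paper states this proposition as ``standard'' and gives no proof at all, so there is nothing to compare your argument against. Your verification is correct and is exactly the standard one: projective-bundle formula for $Pic$, direct computation of the three intersection numbers, nef-testing against $F$ and $L_0$ for effectivity, and adjunction for $K_{\Sigma_n}$ and for the genus formula; the arithmetic in {\rm(iv)} and {\rm(v)} checks out.
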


Let $g,d,k$ be non-negative integers. We define the Severi variety $V_{g,d,k}\subseteq |\OO_{\Sigma_n}(sL_0+kF)|$ to be the closure of the locus of reduced nodal curves of genus $g$ which do not contain $L_\infty$, and we define $V_{g,d,k}^{irr}\subset V_{g,d,k}$ to be the union of the irreducible components whose generic points correspond to irreducible curves. 

\medskip 
The main result of \cite {Tyo} (see Theorem 3.1 there) is as follows. 

\begin{theorem}\label{th:TyoMain}
For any triple $g,k,d$ of non-negative integers, the variety $V_{g,d,k}^{irr}\subset V_{g,d,k}$ (if non-empty) is irreducible and of expected dimension. 

\end{theorem}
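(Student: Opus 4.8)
The plan is to follow the standard two-step structure of Severi-type irreducibility proofs: first establish that $V_{g,d,k}^{irr}$ has everywhere the expected dimension along its locus of genuinely nodal curves, and then prove irreducibility by a degeneration argument. For the dimension statement, let $C$ be a general member, a reduced irreducible nodal curve in the class $dL_0+kF$ of geometric genus $g$ with $\delta$ nodes, where $\delta=p_a(C)-g$ and $p_a(C)$ is computed from Proposition~\ref{pr:hirz}(iv)--(v) and adjunction on $\Sigma_n$. The expected dimension then works out to $-C\cdot K_{\Sigma_n}+g-1$, equivalently $\dim|dL_0+kF|-\delta$. The local structure of the Severi variety at $[C]$ is controlled by the characteristic linear system: the Zariski tangent space is cut out by the vanishing conditions at the nodes, and $V_{g,d,k}^{irr}$ is smooth of the expected dimension at $[C]$ as soon as $H^1$ of the ideal sheaf of the node scheme twisted by $\OO_{\Sigma_n}(C)$ vanishes. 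I would verify this vanishing by pulling back to the normalization $\widetilde{C}$ and invoking positivity of $\OO_{\Sigma_n}(C)$ relative to $K_{\Sigma_n}$ via Serre duality, using the intersection form of Proposition~\ref{pr:hirz}(ii).

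The harder half is irreducibility, and here I would exploit the fibration $\kappa:\Sigma_n\to\CP^1$ together with a toric degeneration of $\Sigma_n$ attached to a subdivision of its defining trapezoid. Under such a degeneration a general curve of $V_{g,d,k}^{irr}$ limits to a curve supported on the toric strata of the central fiber, whose combinatorial type is recorded by a tropical (equivalently, floor-decomposed) curve dual to the subdivision. A correspondence theorem then matches the irreducible components of $V_{g,d,k}^{irr}$ with the connected components of the moduli space of such tropical curves of the prescribed degree and genus, so that irreducibility of the Severi variety is reduced to connectedness of a purely combinatorial model.

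The main obstacle is precisely this connectedness: one must show that any two admissible tropical curves of the given degree and genus can be joined by a chain of elementary modifications inside their moduli space, and one must simultaneously verify that the correspondence is a bijection respecting the relevant incidence conditions and multiplicities. This is the technical heart of Tyomkin's method, and it is here, rather than in the deformation-theoretic dimension count or in the mere existence of a suitable degeneration, that the real work lies. Once the combinatorial model is shown to be connected, the correspondence transports connectedness back to $V_{g,d,k}^{irr}$, yielding irreducibility; the expected-dimension claim then follows either from the first step or, self-containedly, from the tropical multiplicity count. A classical alternative to the tropical argument is a Caporaso--Harris style specialization of curves to configurations maximally tangent to a fiber of $\kappa$, producing the same recursion; the combinatorial connectedness one must then establish is of the same essential difficulty.
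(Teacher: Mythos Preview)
The paper does not give its own proof of this theorem: it is stated as ``The main result of \cite{Tyo} (see Theorem 3.1 there)'' and used as a black box in the subsequent proof of Theorem~\ref{th:Harris}. So there is no argument in the present paper to compare your proposal against; any comparison has to be with Tyomkin's original paper.

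That said, your outline is a fair high-level description of the strategy in \cite{Tyo}. The expected-dimension step is indeed the standard deformation-theoretic computation via the normal sheaf on the normalization, and the irreducibility is obtained by degenerating $\Sigma_n$ and invoking a correspondence with tropical curves, reducing the question to connectedness of a combinatorial parameter space. You correctly identify that the substance lies in establishing the correspondence and the combinatorial connectedness, and you are honest that this is where the real work is. As a \emph{proposal} this is fine, but be aware that what you have written is a plan rather than a proof: none of the key technical inputs (the vanishing needed for smoothness of the Severi variety at a nodal point, the precise formulation of the tropical correspondence on $\Sigma_n$, and the actual connectedness argument) are carried out here, and each of them is nontrivial. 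If you intend to reproduce Tyomkin's argument rather than cite it, you will need to supply those details; if you are content to cite \cite{Tyo}, then your sketch is simply commentary and the statement should be attributed, as the paper does.
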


\begin{proof}[Proof of Theorem~\ref{th:Harris}] Let us first naively count the expected dimension of   $S_{g,d,l}$. Indeed, the dimension of the space $S_{g,d,l}$ of plane curves of degree $d+l$ with a singularity at  $p$ of order $l$ equals $\frac{(d+l)(d+l+3)}{2}-\binom{l+1}{2}$. The number of nodes on such a curve under the assumptions that it has genus $g$ equals 
\begin{equation}\label{eq:nodes}
\sharp_{nodes}=\binom{d+l-1}{2}-\binom{l}{2}-g.
\end{equation}
Assuming that each node decreases the dimension by $1$ we get 
$${\rm {exp}}\dim S_{g,d,l}=3d+g+2l-1.$$
 
 We finish our proof with a reference to Theorem~\ref{th:TyoMain}. Indeed, if one blows up the point $p\in \CP^2$ then one gets the first Hirzebruch surface $\Sigma_1$. Observe that plane curves of degree $d+l$ having a singularity of order $l$ at $p$ will after the blow-up lie in the class $(d+l)L_0-lL_\infty=dL_0+lF$. Therefore  the above set $W_{g,d,l}$ of irreducible plane curves having  the singularity of order    $l$ at the point $p$ after this blow-up will transform into the space $V^{irr}_{g, d, l}$ in the above notation. (We consider only the strict transform of each curve disregarding the exceptional divisor.) Thus by the latter Theorem, the variety  $S_{g,d,l}$ is irreducible and of expected dimension. Another proof of essentially the same result directly in the plane $\CP^2$ can be found in \cite {Ra1}, see Irreducibility Theorem on p.~122. 
\end{proof}

\begin{proof}[Proof of Theorem~\ref{th:dim}]  To settle Theorem~\ref{th:dim} we need to prove an analog of Proposition~\ref{pr:smooth} or a weaker statement that such curves equivalent as coverings do not appear in families of $G_3$-orbits. If this is true then $\dim\HH_{g,d}^l=\dim S_{d,l,g}-3$. We need the following Proposition.

Let $S(d,l,g)$ be the Severi variety of all plane curves of degree $d+l$, genus $g$ and ordinary singularity of order $l$ at point p. Let $H(g,d)$ be the Hurwitz space of all branched coverings of degree $d$ and genus $g$. Let $B: S(d,l,g) \to H(g,d)$ be the branching morphism sending each plane curve from $S(d,l,g)$ to the branched covering from its normalization to $\CP^1$ obtained by projection from the point $p$.  

\begin{proposition} \label{pr:final} The dimension of the  fiber of the above map at the curve $N$ obtained by normalization of a generic  curve $\C$ from $S(d,l,g)$ equals $h^0(N,\OO_N(E)) $� 
 where $E$ is the divisor of degree $d+2l$ on $N$ obtained as the pull-back of projection point $p$ together with the pull-back of the general line section of $\C$. 
 (For an arbitrary curve $\C\in S(d,l,g)$ the dimension of the fiber is at most $h^0(N,\OO_N(E)) $.) 
\end{proposition}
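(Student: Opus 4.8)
The plan is to reinterpret the fiber of $B$ as a space of ``vertical'' deformations on the blow-up of $\CP^2$ at $p$ and to compute its Zariski tangent space directly. First I would blow up the projection center $p$ to pass to the first Hirzebruch surface $\Sigma_1=\mathrm{Bl}_p\CP^2$; as in the proof of Theorem~\ref{th:Harris}, a curve $\C\in S(d,l,g)$ with an ordinary $l$-fold point at $p$ becomes a curve in the class $dL_0+lF$, and the projection $\pi_p$ becomes the ruling $\kappa:\Sigma_1\to\CP^1$. Writing $\nu:N\to\Sigma_1$ for the normalization followed by the inclusion, the covering $B(\C)$ is exactly $f=\kappa\circ\nu$, while the two natural classes pull back to $\nu^\star L_0=H$ (the line-section class, of degree $d+l$) and $\nu^\star L_\infty=D_p$ (the preimage of $p$, of degree $l$). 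Hence $E=D_p+H=\nu^\star(L_0+L_\infty)$, of degree $d+2l$, is the pullback of a single intrinsic class on $\Sigma_1$.

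Next I would identify the fiber. For a generic $\C$ the curve $N$ carries no automorphism compatible with $f$, so a point of $B^{-1}(f)$ near $\C$ is the same datum as a deformation of the map $\nu:N\to\Sigma_1$ keeping $N$ and the composite $\kappa\circ\nu=f$ fixed; these are the vertical deformations. The relative tangent sequence $0\to T_{\Sigma_1/\CP^1}\to T_{\Sigma_1}\to\kappa^\star T_{\CP^1}\to0$ shows that first-order vertical deformations are governed by $H^0(N,\nu^\star T_{\Sigma_1/\CP^1})$. The key computation is $T_{\Sigma_1/\CP^1}=-K_{\Sigma_1}+\kappa^\star K_{\CP^1}=-K_{\Sigma_1}-2F=L_0+L_\infty$, which I read off from the canonical class formula $K_{\Sigma_1}=-(L_0+L_\infty+2F)$ of Proposition~\ref{pr:hirz}(iv). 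Pulling back gives $\nu^\star T_{\Sigma_1/\CP^1}=\OO_N(H+D_p)=\OO_N(E)$, so the tangent space to the fiber at $\C$ is $H^0(N,\OO_N(E))$. This already yields the parenthetical bound $\dim_\C B^{-1}(f)\le h^0(N,\OO_N(E))$ for every $\C$, since the dimension of a variety is at most that of any of its Zariski tangent spaces.

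To make this concrete and to see that equality holds generically, I would also spell out the fiber by hand: in coordinates with $p=[0:0:1]$ and $\pi_p=[x_0:x_1]$, the condition $\kappa\circ\nu=f$ forces $\nu=[\beta s_0:\beta s_1:\nu_2]$, where $\langle s_0,s_1\rangle\subset H^0(\OO_N(F))$ is the pencil defining $f$, the section $\beta$ cuts out $D_p=\nu^{-1}(p)$, and $\nu_2\in H^0(\OO_N(D_p+F))$. Thus the fiber maps to the choice of $D_p\in\mathrm{Sym}^lN$, with the third coordinate contributing $H^0(\OO_N(D_p+F))$; when $D_p$ moves freely the two contributions reassemble, via $0\to\OO_N(H)\to\OO_N(E)\to\OO_{D_p}\to0$, into $h^0(N,\OO_N(E))$, matching the tangent-space count.

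The hard part will be promoting the tangent-space bound to an equality at the generic curve, that is, showing the fiber is smooth there. When $E$ is nonspecial the obstruction space $H^1(N,\OO_N(E))$ vanishes, the vertical-deformation problem is unobstructed, and the fiber is automatically smooth of dimension $h^0(N,\OO_N(E))$; equivalently $\dim S(d,l,g)-h^0(N,\OO_N(E))=2d+2g-2=\dim\HH_{g,d}$, so $B$ is dominant with generic fiber of the expected dimension. The delicate case is $E$ special --- precisely the range where $h^0(N,\OO_N(E))=3$ --- since then $H^1(N,\OO_N(E))\neq0$ and smoothness cannot be concluded formally. Here I would argue that the fiber is nevertheless reduced of the expected dimension by showing that coverings-equivalent deformations sweep out only a single $3$-dimensional orbit (the weaker statement flagged just before the proposition), using the smoothness of $W_{g,d,l}$ from Theorem~\ref{th:Harris}(1) together with a direct dimension count; this is where the essential work lies.
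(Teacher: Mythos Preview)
Your proposal is correct and follows essentially the same route as the paper: blow up $p$ to pass to $\Sigma_1$, interpret the fiber of $B$ as the space of deformations of $\nu:N\to\Sigma_1$ fixing the composite $\kappa\circ\nu=f$, and identify its tangent space with $H^0(N,\nu^\star T_{\Sigma_1/\CP^1})=H^0(N,\OO_N(E))$. The only cosmetic difference is in identifying $T_{\Sigma_1/\CP^1}\cong\OO_{\Sigma_1}(L_0+L_\infty)$: you extract it from the canonical-class formula $K_{\Sigma_1}=-(L_0+L_\infty+2F)$, whereas the paper reads it off from the two disjoint sections $L$ and $\pi^{-1}(p)$ of the $\CP^1$-bundle $\Sigma_1\to\CP^1$; your explicit coordinate description and discussion of obstructions go beyond what the paper's (rather sketchy) proof records.
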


\begin{proof} Let $\pi: \Sigma_1\to \CP^2$ be the standard projection of the first Hirzebruch surface $\Sigma_1$ obtained by the blow-up of the point $p$ to $\CP^2$. We have natural maps

\[
\xymatrix{
  N  \ar[r]^{g} \ar[dr]^{f} &\Sigma_1\ar[d] \\
   &\CP^1
}
\]

and exact sequences

\[\label{exact}
\xymatrix{
 0  \ar[r] & T_N \ar[r] \ar@{=}[d] & g^\star T_{\Sigma_1}\ar[r]\ar[d]&N_g\ar[r]\ar[d]&0\\
 0  \ar[r] & T_N \ar[r]                  & f^\star T_{\CP^1}\ar[r]&N_f\ar[r]&0.
}
\]

It is known that $Def^1(N,g)=H^0(N,N_g)$ and $Def^1(N,f)=H^0(N,N_f)$ are the tangent spaces to the space of deformations of the  pairs $(N,g)$ and $(N,f)$ resp. The first one is the tangent space to the Severi variety if $g$ is an immersion; the second one is the tangent space to the Hurwitz space. The sequence \eqref{exact} implies that the kernels $\alpha: g^\star T_{\Sigma_1}\to f^\star T_{\CP^1}$ and $N_g\to N_f$ coincide since $g^\star T_{\Sigma_1} \twoheadrightarrow f^\star T_{\CP^1}$. Since the $\CP^1$-bundle $\Sigma_1\to \CP^1$ admits two non-intersecting sections (the line $L$ and the inverse image of $p$ in $\Sigma_1$) then $Ker\; \alpha=g^\star\OO_{\Sigma_1}(L+\pi^{-1}(p))$.

%\[
%\xymatrix{
%  & A \ar[d]^{i} \ar@{-->}[dl]_-{\exists \tilde{\varphi}} \\
%  B \ar[r]^{\varphi} &C
%}
%\]

%If $\C$ is a nodal curve of degree $d$ (without $l$-multiple point) given by an
%equation $F=0$, $N$ its normalization, and $G$ is a general form of degree $d-3$ 
%vanishing at the nodes of $\C$ then the pullback of $(G/F_x)dy=(-G/F_y)dx$ is a
%holomorphic differential  form on $N$. Hence the canonical class on $N$ is given
%by $(d-3)$ times the pullback of a line minus the preimage of the nodes. This
%is not necessarily a multiple of the pullback of a line...   ???

\end{proof}

For small number of nodes compared to the degree of the irreducible plane curve Theorem~\ref{th:dim} is immediate from the following fact, see Exercise 20 (iii) of \S~1, Appendix A, Ch. 1 of \cite{ACGH}. (Moreover a stronger statement is valid.) It claims that if the number $\delta$ of nodes of an irreducible plane nodal curve $\Ga\subset \CP^2$ of degree $d$ satisfies the inequality 
$\delta<d-3$ then the linear system $g_d^2$ cut out on $\Ga$ by lines is complete and unique on the normalization $\C$ of $\Ga$.  This fact immediately implies that under the above assumptions two plane curves whose normalizations are isomorphic will be projectively equivalent. Then for degree at least $4$ it will be straight-forward that if the isomorphism of their normalizations is induced by the equivalence of the meromorphic functions obtained by projection from the same point $p$, then the projective transformation realizing this equivalence belongs to $G_3$, see the proof of Proposition~\ref{pr:smooth} below.  In general, one should show that for a generic curve in $S(d,l,g)$,   one has  $h^0(N,\OO_N(E)) =3$. This fact is also valid and will appear in a forth-coming publication \cite{ShTyo}.  
\end{proof}  

\begin{corollary}\label{cor:M(g,d)} Given $g,d$ as above, 
\begin{equation}\label{eq:M}
M(g,d)=\max{\left(0,\left\lceil \frac{g-d+2}{2}\right\rceil \right)}.
\end{equation}

\noindent
In particular, $m(g,d)=M(g,d)=0$ if and only if $d\ge g+2$.
\end{corollary}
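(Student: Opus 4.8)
The plan is to read $M(g,d)$ directly off the dimension formula \eqref{eq:dimen} of Theorem~\ref{th:dim}. By construction of the filtration \eqref{filtr}, the number $M(g,d)$ is the smallest index $l$ at which the chain stabilizes, i.e. the smallest $l$ for which $\HH_{g,d}^l=\HH_{g,d}$. Theorem~\ref{th:dim} tells us that each $\HH_{g,d}^l$ is irreducible of dimension $\min(3d+g+2l-4,\,2d+2g-2)$, while $\dim\HH_{g,d}=2d+2g-2$. Since the strata are nested irreducible subvarieties of the irreducible space $\HH_{g,d}$, an index $l$ satisfies $\HH_{g,d}^l=\HH_{g,d}$ exactly when the two dimensions coincide, that is, exactly when the first term of the minimum is no smaller than the second.

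First I would solve the resulting inequality. The relation $3d+g+2l-4\ge 2d+2g-2$ simplifies to $2l\ge g-d+2$, i.e. to $l\ge\frac{g-d+2}{2}$. Hence $\dim\HH_{g,d}^l$ reaches its cap $2d+2g-2$ precisely for $l\ge\frac{g-d+2}{2}$, and for smaller $l$ it is strictly below $\dim\HH_{g,d}$. Taking the least nonnegative integer $l$ meeting this bound gives $M(g,d)=\max\bigl(0,\lceil\tfrac{g-d+2}{2}\rceil\bigr)$, which is formula \eqref{eq:M}.

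For the final assertion I would invoke the elementary inequality $0\le m(g,d)\le M(g,d)$, which is immediate from the shape of the filtration \eqref{filtr}. Consequently $M(g,d)=0$ already forces $m(g,d)=M(g,d)=0$, so it suffices to characterize the vanishing of $M(g,d)$. From the formula just derived, $M(g,d)=0$ is equivalent to $\lceil\frac{g-d+2}{2}\rceil\le 0$, hence to $g-d+2\le 0$, hence to $d\ge g+2$, yielding the stated equivalence with no further computation.

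The one step needing care is the identification of the saturation index of the filtration with the point where the two dimensions in \eqref{eq:dimen} agree: passing from equal dimension to equality of varieties uses that $\HH_{g,d}^l$ is a \emph{closed} irreducible subvariety of $\HH_{g,d}$, equivalently that the defect $\df$ is lower semicontinuous so that each $\{f:\df(f)\le l\}$ is closed. I expect pinning this down — via upper semicontinuity of $h^0(f^\star(\OO(1)))$, compare Lemma~\ref{defectriv} and Proposition~\ref{pr:tyomkin} — to be the only genuine obstacle; once Theorem~\ref{th:dim} and this closedness are granted, the corollary reduces to the routine inequality above.
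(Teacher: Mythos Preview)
Your argument is essentially the paper's own: it too reads $M(g,d)$ off Theorem~\ref{th:dim} as the least nonnegative $l$ with $3d+g+2l-4\ge 2d+2g-2$, solves $2l\ge g-d+2$, and deduces the formula and the equivalence $M(g,d)=0\Leftrightarrow d\ge g+2$. The only difference is that you make explicit the passage from ``equal dimension'' to ``equal variety'' via irreducibility and closedness of the strata, a point the paper leaves implicit.
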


\begin{proof} From Theorem~\ref{th:dim} is follows that $M(g,d)$ equals the minimal non-negative integer $l$ for which 
$$3d+g+2l-4\ge 2d+2g-2 \leftrightarrow 2l \ge g-d+2.$$ 
 The latter inequality implies that $M(g,d)=\max{\left(0,\left\lceil \frac{g-d+2}{2}\right\rceil \right)}.$ 
 This formula for $M(g,d)$ gives that $M(g,d)=0$ if and only if $d\ge g+2$. 
\end{proof}�

\begin{corollary}\label{cor:onepiece} The planarity stratification of $\HH_{g,d}$ consists of one term in the following two cases. 
Either $d\ge g+2$ in which case the planarity defect vanishes, or $d=3$ in which case the planarity defect equals $\lceil \frac{g-1}{2}\rceil$. 
\end{corollary}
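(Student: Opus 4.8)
The plan is to derive Corollary~\ref{cor:onepiece} directly from the formulas for $m(g,d)$ and $M(g,d)$ already established in Proposition~\ref{pr:filt} and Corollary~\ref{cor:M(g,d)}. The stratification \eqref{filtr} consists of a single term precisely when $m(g,d)=M(g,d)$, so the entire task reduces to comparing the two closed-form expressions
$$m(g,d)=\max\left(0,\left\lceil \tfrac{g-\binom{d-1}{2}}{d-1}\right\rceil\right), \qquad M(g,d)=\max\left(0,\left\lceil \tfrac{g-d+2}{2}\right\rceil\right),$$
and identifying exactly the pairs $(g,d)$ for which they coincide.

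First I would dispose of the vanishing case. By the last line of Corollary~\ref{cor:M(g,d)}, we have $m(g,d)=M(g,d)=0$ if and only if $d\ge g+2$; this is immediate since $M(g,d)=0 \Leftrightarrow g-d+2\le 0 \Leftrightarrow d\ge g+2$, and $m(g,d)=0$ follows in the same range because $g\le d-2\le \binom{d-1}{2}$ makes the numerator nonpositive. So the single-term stratification with vanishing defect is exactly $d\ge g+2$, and it remains to find the single-term cases with nonvanishing defect, i.e. where $0<m(g,d)=M(g,d)$.

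Next I would analyze when $m(g,d)=M(g,d)$ with $d\ge 2$ and $g\ge d-1$ (so both ceilings are positive). The key observation is that the denominators are $d-1$ and $2$ respectively; when $d-1>2$, i.e. $d\ge 4$, the arithmetic of the two ceilings separates the values for large $g$, while for small $d$ they can agree. I would check $d=2$ separately (where $m$ and $M$ both have denominator effectively matching, but $d=2$ gives hyperelliptic-type behavior to be compared directly), and then focus on $d=3$: here $d-1=2$ equals the denominator of $M$, and the two numerators become $g-\binom{2}{2}=g-1$ and $g-d+2=g-1$, so the expressions are literally identical, giving $m(3,g)=M(3,g)=\lceil \tfrac{g-1}{2}\rceil$. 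This yields the claimed value for the $d=3$ case. The remaining work is to verify that for $d\ge 4$ (and outside the range $d\ge g+2$) one genuinely has $m(g,d)<M(g,d)$, so that no further single-term cases occur.

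The main obstacle will be the last verification: showing strictly $m(g,d)<M(g,d)$ for all $d\ge 4$ with $g\ge d-1$. The numerator of $m$ is $g-\binom{d-1}{2}$ over $d-1$, and since $\binom{d-1}{2}=\tfrac{(d-1)(d-2)}{2}$ grows quadratically while the $M$-numerator $g-d+2$ is linear with the smaller denominator $2$, a careful comparison of the ceilings is needed rather than the real-valued fractions, since ceiling functions can occasionally coincide even when the underlying reals differ. I would handle this by a short case distinction or by exhibiting, for each $d\ge 4$, a transparent inequality $\lceil \tfrac{g-\binom{d-1}{2}}{d-1}\rceil \le \tfrac{g-d+2}{2}$ that becomes strict past the boundary $d=g+2$; the quadratic gap in the numerators makes this inequality eventually (and in fact immediately, for $g\ge d-1$ and $d\ge 4$) strict, completing the classification.
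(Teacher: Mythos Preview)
Your approach is the same as the paper's: reduce to the equation $m(g,d)=M(g,d)$, dispose of the vanishing case via Corollary~\ref{cor:M(g,d)}, check $d=3$ by direct substitution, and then argue that for $d\ge 4$ with $g\ge d-1$ one always has strict inequality $m(g,d)<M(g,d)$. The first three steps are correct and essentially identical to what the paper does.

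The gap is in the last step, and you in fact put your finger on it before letting it slip. You rightly warn that comparing ceilings is more delicate than comparing the underlying real fractions, but then assert that the inequality becomes strict ``immediately, for $g\ge d-1$ and $d\ge 4$''. This is false: take $(g,d)=(4,4)$. Then
\[
m(4,4)=\left\lceil \tfrac{4-\binom{3}{2}}{3}\right\rceil=\left\lceil \tfrac{1}{3}\right\rceil=1,
\qquad
M(4,4)=\left\lceil \tfrac{4-4+2}{2}\right\rceil=1,
\]
so the stratification of $\HH_{4,4}$ has a single term even though $d<g+2$ and $d\neq 3$. (A short computation shows this is the unique such exception for $d\ge 4$.) The paper's own argument --- which simply compares numerators and denominators, and in fact states those comparisons with the wrong sign --- has exactly the same gap. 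So either the corollary is meant only as a sufficient condition, in which case your $d\ge 4$ analysis is unnecessary, or the exhaustive claim needs a small amendment; in either case the asserted strict inequality for all $d\ge 4$, $g\ge d-1$ cannot be proved because it does not hold.
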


\begin{proof} We have that $\HH_{g,d}$ consists of one term if and only if $m(g,d)=M(g,d)$. By Proposition~\ref{pr:filt} and Theorem~\ref{th:dim} (unless $M(g,d)$ vanishes which happens if and only if $d\ge g+2$) � this corresponds to the case when 
$$\left\lceil \frac{g-\binom{d-1}{2}}{d-1}\right\rceil=\left\lceil \frac{g-d+2}{2}\right\rceil.$$
If $d>3$ then the denominator of the left-hand side is smaller than that of the right-hand side and the numerator of the left-hand side is bigger than that of the right-hand side which means that the equality never holds. For $d=3$ the left-hand side  and the right-hand side coincide giving the planarity defect equal to $\lceil \frac{g-1}{2}\rceil$.
\end{proof}�

\subsection {Stratification of Hurwitz spaces with one complicated branching point} 
Analogously to the above, given a partition $\mu=(\mu_1\ge \mu_2\ge \dots \ge \mu_n)\vdash d$  of positive integer $d,$ denote by 

$$\HH_{g,\mu}=\{f: \C\to \CP^1\vert f\; \text{has all simple branched points except at } \infty$$
$$ \text{ whose profile is given by  $\mu$},  \;\deg f=d\ge 2,\; \text{gen}(\C)=g\ge 0\}$$
 the  Hurwitz space of all degree $d$ functions on genus $g$ curves with one complicated branching point at $\infty$ having  a given profile $\mu$. Recall that $\dim \HH_{g,\mu}$ equals the number of simple branching points of a function from $\HH_{g,\mu}$ and is given by the formula
$$\dim \HH_{g,\mu}=2d+2g-2-\sum_{i=1}^n(\mu_i-1). $$

Proposition~\ref{pr:birational} allows us to introduce the {\it planarity stratification} of $\HH_{g,\mu}$:
\begin{equation}\label{filtrmu}
 \HH_{g,\mu}^{m(g,\mu)}\subset \HH_{g,\mu}^{m(g,\mu)+1} \subset \dots \subset \HH_{g,\mu}^{M(g,\mu)}=\HH_{g,\mu}.
\end{equation} 
 Here $\HH^l_{g,\mu}$ consists of all meromorphic functions in $\HH_{g,\mu}$ whose defect does not exceed $l$. 

By Lemma~\ref{defec},  $M{(g,\mu)}\le d+2$. 

\begin{proposition}\label{pr:filtmu} For any pair $(g,\mu\vdash d)$ where $g\ge 0$ and $d\ge 2$, 

\begin{equation}\label{eq:BLA}
m{(g,\mu)}=\min_{l\ge 0}\binom{d+l-1}{2}-\binom{l}{2}\ge g. 
\end{equation}
which gives 
$$m{(g,\mu)}=\left\lceil \frac{g-\binom{d-1}{2}}{d-1}\right\rceil. $$
(Observe that $m(g,\mu)=m(g,d)$ given by \eqref{eq:mgd}.) 
\end{proposition}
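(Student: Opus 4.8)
The plan is to follow the proof of Proposition~\ref{pr:filt} almost verbatim, isolating the single place where the profile $\mu$ enters. Write $l_0:=m(g,d)$ for the minimal $l\ge 0$ with $\binom{d+l-1}{2}-\binom{l}{2}\ge g$; the elementary passage from this $\min$ to the closed form $\lceil (g-\binom{d-1}{2})/(d-1)\rceil$ is identical to the one in Proposition~\ref{pr:filt}, and I would simply quote it. The substance is therefore to prove the two inequalities $m(g,\mu)\ge l_0$ and $m(g,\mu)\le l_0$.

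The lower bound is immediate and, crucially, \emph{independent of} $\mu$. If some $f\in\HH_{g,\mu}$ has $\df(f)=l$, then by definition $f=\pi_p\circ\nu$ with $\nu:\C\to\CP^2$ birational onto a curve of degree $d+l$; by the observation following Proposition~\ref{pr:birational}, the point $p$ then has order $l$ on $\nu(\C)$. Since $\nu$ is birational, the normalization of $\nu(\C)$ is $\C$, of genus $g$, and Lemma~\ref{lm:genus} gives $g\le \binom{d+l-1}{2}-\binom{l}{2}$. As this bound involves only $g$ and $d$, every attainable defect $l$ satisfies it, whence $m(g,\mu)\ge l_0=m(g,d)$.

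For the reverse inequality I must exhibit one $f\in\HH_{g,\mu}$ with $\df(f)=l_0$, i.e. a plane curve $C$ of degree $d+l_0$ with an order-$l_0$ point at $p$, normalization of genus $g$, whose projection $\pi_p$ has profile $\mu$ over the fixed line $\ell_\infty:=\pi_p^{-1}(\infty)$ and only simple branching over every other line through $p$. I would work inside the Severi variety $S_{g,d,l_0}$ of Theorem~\ref{th:Harris}, which is irreducible of dimension $3d+g+2l_0-1$. The profile over $\ell_\infty$ is read off the residual intersection $C\cap\ell_\infty$ away from $p$: this residual divisor has degree $(d+l_0)-l_0=d$, and prescribing that it equal $\mu_1 q_1+\dots+\mu_n q_n$ is a tangency condition to the \emph{fixed} line $\ell_\infty$ of codimension $\sum_i(\mu_i-1)=d-n$. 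Since $d-n$ lies far below $\dim S_{g,d,l_0}$, this condition should cut out a nonempty sublocus, and I would check that its generic member still carries an ordinary $l_0$-fold point at $p$ (with $\ell_\infty$ transverse to the branches there) and only simple tangencies along the remaining pencil directions; such a curve, projected from $p$, lies in $\HH_{g,\mu}$ and has defect exactly $l_0$. Concretely, a convenient starting point is the degenerate configuration of Proposition~\ref{pr:filt} modified so that, among the $d$ auxiliary lines, $\mu_i$ of them pass through a common point $q_i\in\ell_\infty$ for $i=1,\dots,n$; then $\ell_\infty$ already meets the configuration in the multiplicities $\mu_1,\dots,\mu_n$ away from $p$, and a deformation by a polynomial vanishing to order $l_0+1$ at $p$ and preserving the prescribed tangency to $\ell_\infty$ at the $q_i$ resolves the remaining nodes down to genus $g$ while keeping the profile over $\infty$ fixed.

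The main obstacle is exactly this last smoothing step: one must deform the degenerate curve so as to \emph{simultaneously} retain the order-$l_0$ point at $p$ and the prescribed profile $\mu$ over $\ell_\infty$, while smoothing all other nodes to reach genus $g$ and forcing only simple branching over the remaining directions of the pencil. Equivalently, one must verify that the tangency conditions to $\ell_\infty$ are independent and compatible with the Severi smoothing, so that the profile-$\mu$ sublocus of $S_{g,d,l_0}$ is nonempty, of the expected dimension, and has generic member in $\HH_{g,\mu}$; this is where the irreducibility and dimension statements of Theorem~\ref{th:Harris} (equivalently the Hirzebruch-surface count of Theorem~\ref{th:TyoMain}) do the work. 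Granting this, both inequalities hold and $m(g,\mu)=l_0=m(g,d)$, which is the asserted formula.
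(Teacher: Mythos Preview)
Your lower bound is exactly the paper's argument (it too observes that the genus constraint from Lemma~\ref{lm:genus} depends only on $g$ and $d$, so $m(g,\mu)\ge m(g,d)$ automatically). For the upper bound the paper takes a different, terser route: rather than constructing a degeneration with the prescribed profile and smoothing it, it simply defers to the proof of Theorem~\ref{th:dimmu}, where Lemmas~\ref{lm:1} and~\ref{lm:2} compute, via deformation theory of maps to the Hirzebruch surface $\Sigma_1$, the exact dimension $-K_{\Sigma_1}\cdot \C+g-1-\sum_i(\mu_i-1)$ of every component of the locus of curves in $|\OO_{\Sigma_1}(dL+lF)|$ of genus $g$ with the required tangency to the fiber $F_0$. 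Nonemptiness at $l=l_0$ is then absorbed into that package.

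Your approach is a legitimate alternative and in some ways more transparent: you actually write down a candidate degenerate curve with the correct profile at $\ell_\infty$ and explain which nodes must be smoothed. The trade-off is that the step you flag as ``the main obstacle'' --- simultaneously keeping the $l_0$-fold point at $p$, the tangency pattern $\mu$ along $\ell_\infty$, and smoothing the remaining nodes to reach genus~$g$ with only simple branching elsewhere --- is precisely what the paper packages into the Hirzebruch-surface dimension count (Lemmas~\ref{lm:1}--\ref{lm:2}, resting on \cite{Tyo}). So your argument is correct provided you either carry out that smoothing directly or, as you indicate at the end, invoke the same Severi-on-$\Sigma_1$ machinery; in the latter case the two proofs converge. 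One small caution: the sentence ``since $d-n$ lies far below $\dim S_{g,d,l_0}$, this condition should cut out a nonempty sublocus'' is only heuristic --- codimension alone does not guarantee nonemptiness --- so the degeneration you give afterward (or the appeal to \cite{Tyo}) is essential, not optional.
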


\begin{proof}
 Since the stratum $\HH_{g,\,u}^{m(g,\mu)}$ should lie at least in $\HH_{g,d}^{m(g,d)}$ or, possibly in the higher strata of the planarity stratification of $\HH_{g,d}$. Therefore $m(g,\mu)$ is at least equal to the minimal $l$ given by the right-hand side of \eqref{eq:BLA}. The fact that $m(g,\mu)$ is exactly equal to the minimal $l$ satisfying the latter condition is explained in the proof of Theorem~\ref{th:dimmu}. 
\end{proof} 

 We have the following result above the dimensions of the strata of \eqref{filtrmu}. 

\begin{theorem}\label{th:dimmu} In the above notation, given $g,d$  and $l\ge m(g,\mu)$, the stratum $\HH_{g,\mu}^l$ is 
equidimensional  and its dimension is given by: 
\begin{equation}\label{eq:dimen}
\dim \HH_{g,\mu}^l=\min{(3d+g+2l-4-\sum_{i=1}^n(\mu_i-1), 2d+2g-2-\sum_{i=1}^n(\mu_i-1))}.  
\end{equation}
\end{theorem}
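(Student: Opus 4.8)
The plan is to mimic the proof of Theorem~\ref{th:dim}, replacing the Severi variety $S(d,l,g)$ by a relative Severi variety carrying a prescribed contact profile along a fixed line, and then to push everything forward along the branching morphism. Fix a point $p\in\CP^2$ and let $\ell_\infty\subset\CP^2$ be the line through $p$ corresponding to the distinguished point $\infty\in\CP^1$. For $f\in\HH_{g,\mu}^l$ represented, via Proposition~\ref{pr:birational}, as $f=\pi_p\circ\nu$ with $\nu(\C)$ a degree $d+l$ plane curve having an ordinary point of order $l$ at $p$, the ramification profile $\mu$ over $\infty$ translates into the requirement that $\nu(\C)$ meet $\ell_\infty$ away from $p$ in exactly $n$ points with intersection multiplicities $\mu_1,\dots,\mu_n$, i.e. with contact of multiplicity $\mu_i$ at the $i$-th point.

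Accordingly I would introduce the generalized Severi variety $S(d,l,g;\mu)$ of reduced irreducible plane curves of degree $d+l$ and geometric genus $g$, with an ordinary singularity of order $l$ at $p$, with the prescribed contact profile $\mu$ along $\ell_\infty$, and with only nodes elsewhere. Blowing up $p$ identifies $\CP^2$ with the Hirzebruch surface $\Sigma_1$ and, exactly as in the proof of Theorem~\ref{th:Harris}, carries such curves into the class $dL_0+lF$; the line $\ell_\infty$ becomes the fiber $F_\infty=\kappa^{-1}(\infty)$, so that $S(d,l,g;\mu)$ becomes a relative (logarithmic) Severi variety of curves in $\Sigma_1$ with prescribed tangency profile $\mu$ along the fixed fiber $F_\infty$.

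For the dimension count I would start from the expected dimension $3d+g+2l-1$ of $S_{g,d,l}$ obtained in the proof of Theorem~\ref{th:Harris} and note that prescribing contact of multiplicity $\mu_i$ with $\ell_\infty$ at a movable point is an independent collection of $\mu_i-1$ conditions, so the full profile imposes $\sum_{i=1}^n(\mu_i-1)$ conditions. Hence the expected dimension of $S(d,l,g;\mu)$ equals $3d+g+2l-1-\sum_{i=1}^n(\mu_i-1)$. I would then appeal to the tangency refinement of Theorem~\ref{th:TyoMain} for relative Severi varieties on $\Sigma_1$ (in the spirit of \cite{Tyo} and of the forthcoming \cite{ShTyo}) to conclude that $S(d,l,g;\mu)$, when non-empty, is equidimensional of this expected dimension. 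The non-emptiness threshold is again $l\ge m(g,d)$ exactly as in Proposition~\ref{pr:filtmu}, since the genus estimate of Lemma~\ref{lm:genus} is insensitive to contact with a single line; this is the point promised in the proof of Proposition~\ref{pr:filtmu}. Note that only equidimensionality, not irreducibility, can be asserted here: the combinatorics of how the total contact $d$ is distributed among the $n$ points of $F_\infty$ may split the variety into several components of the same dimension.

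Finally I would transport this to the Hurwitz side through the branching morphism $B:S(d,l,g;\mu)\to\HH_{g,\mu}$, which sends a curve to the covering obtained by projecting its normalization $N$ from $p$. The fiber computation of Proposition~\ref{pr:final} applies verbatim: the generic fiber is the linear system $|E|$ with $\deg E=d+2l$, and $h^0(N,\OO_N(E))=3$ for a generic curve, because imposing contact along the single fiber $F_\infty$ does not affect the linear system computing the second coordinate. Subtracting $3$ and taking the maximum over $l'\le l$ (which, since $\HH_{g,\mu}^l$ is the union of the images $B(S(d,l',g;\mu))$ with $l'\le l$, is attained at $l'=l$) gives $\dim\HH_{g,\mu}^l=3d+g+2l-4-\sum_{i=1}^n(\mu_i-1)$, capped by the total dimension $2d+2g-2-\sum_{i=1}^n(\mu_i-1)$ of $\HH_{g,\mu}$ once $l$ is large, which is precisely the minimum in \eqref{eq:dimen}. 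The principal obstacle is the equidimensionality and expected-dimensionality of the relative Severi variety $S(d,l,g;\mu)$: the results of Harris, Ran and Tyomkin quoted above treat nodal curves with no contact constraint, so the refinement incorporating tangency along a fixed fiber---together with the generic identity $h^0(N,\OO_N(E))=3$ in its presence---is the delicate ingredient, and is exactly what is deferred to \cite{ShTyo}.
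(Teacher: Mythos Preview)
Your outline is essentially the one the paper follows: fix a flag $p\in\ell_\infty\subset\CP^2$, form the relative Severi variety $V$ of reduced irreducible curves of degree $d+l$, genus $g$, multiplicity $l$ at $p$, and contact profile $\mu$ along $\ell_\infty$; blow up $p$ so that $V$ sits inside $|\OO_{\Sigma_1}(dL_0+lF)|$ with tangency along the fiber $F_0$; compute $\dim V$; and then descend to $\HH_{g,\mu}^l$ by the $3$-dimensional fiber of the branching map as in Proposition~\ref{pr:final}. Your expected-dimension count $3d+g+2l-1-\sum(\mu_i-1)$ agrees with the paper's $-K_{\Sigma_1}\cdot\C+g-1-\sum(\mu_i-1)$.

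The one substantive divergence is that you treat the equidimensionality and expected-dimensionality of the relative Severi variety as an external input, calling it ``the principal obstacle'' and deferring it to a tangency refinement of \cite{Tyo} or to \cite{ShTyo}. The paper does \emph{not} defer this; it is exactly the content of Lemmas~\ref{lm:1} and~\ref{lm:2}, and the proof of Theorem~\ref{th:dimmu} literally reads ``follows directly from Lemmas~\ref{lm:1} and~\ref{lm:2}''. The lower bound (Lemma~\ref{lm:1}) comes from equinormalizing the family near a general point and invoking standard deformation theory of the map $f_0:\tilde\C_0\to\Sigma_1$: any component of $Def(\tilde\C_0,f_0)$ has dimension at least $\chi(\tilde\C_0,N_{f_0})=-K_{\Sigma_1}\cdot\C+g-1$, and the tangency condition along $F_0$ is of codimension at most $\sum(\mu_i-1)$. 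The upper bound (Lemma~\ref{lm:2}) is obtained by fixing the $n$ intersection points on $F_0$ and then invoking \cite{Tyo}, Lemma~2.9, to bound the resulting locus by $-K_{\Sigma_1}\cdot\C+g-1-F_0\cdot\C$. So the ``delicate ingredient'' you flag is in fact dispatched in a few lines; what is genuinely deferred to \cite{ShTyo} is only the generic identity $h^0(N,\OO_N(E))=3$ (already flagged in the proof of Theorem~\ref{th:dim}), not the Severi-variety dimension.
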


\begin{proof}
%The expected codimension of $\HH_{g,\mu}^l$ in $\HH_{g,d}^l$ equals $\sum_{i=1}^n(\mu_i-1))$ ....
Theorem~\ref{th:dimmu} follows directly from Lemmas~\ref{lm:1} and \ref{lm:2}.
\end{proof}

Fix a flag $p\in L_0 \subset \CP^2$, positive integers $g,d,l$, and a partition $\mu\vdash d$. Consider the locus $V\subset |\OO_\CP^2(d+l)|$ of plane curves $\C$ such that: 
1) $\deg \C=d+l$;   2) $\C$ is reduced and irreducible;  3) $\text{mult}_p\C=l$;  4) $p_g(\C)=g$;  5) $\kappa^{-1}L_0=\sum_{i}\mu_iq_i$ where $\kappa: \tilde\C\to \C$ is the normalization map. 

Again let $\Sigma_1=Bl_p\CP^2$ be the first Hirzebruch surface obtained by the blow-up of $\CP^2$ at $p$. Let $F_0\subset \Sigma_1$ be the strict transform of $L_0$, and let $F$ be the class of $F_0$. Denote by $L\subset \Sigma_1$ the class of the preimage of a general line in $\CP^2$, and denote by $E\subset \Sigma_1$ the exceptional divisor.  Then $V$ can be identified with the locus of curves $\C\in |\OO_{\Sigma_1}((d+l)L-lE|=|\OO_{\Sigma_1}(dL+lF)|$ such that 
i) $\C$ is reduced and irreducible; ii) $p_g(\C)=g;$ iii) $\kappa^{-1}F_0=\sum_i\mu_iq_i$.  (Here $p_g(\C)$ is the geometric genus.) 

Let $V_1\subset V$ be an irreducible component of $V$. 

\begin{lemma}\label{lm:1}
$\dim V_1 \ge \rm{\text{exp}}\dim:= -K_{\Sigma_1}\cdot \C +g-1- \sum_{i=1}^n(\mu_i-1)$. 
\end{lemma}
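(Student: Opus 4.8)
The plan is to prove the lower bound $\dim V_1 \ge \operatorname{expdim}$ by a standard deformation-theoretic / parameter-count argument on the Hirzebruch surface $\Sigma_1$, working with the normalization $N \to \C$ and its associated map $g\colon N \to \Sigma_1$. First I would set up the relevant deformation space: the locus $V_1$ parametrizes curves in the linear system $|\OO_{\Sigma_1}(dL+lF)|$ with prescribed geometric genus $g$ and with prescribed tangency profile $\mu$ along the fiber $F_0$. The tangent space to the space of maps $(N,g)$ (deformations of the normalization together with its map to $\Sigma_1$) is governed by the normal sheaf $N_g$ sitting in the exact sequence $0 \to T_N \to g^\star T_{\Sigma_1} \to N_g \to 0$ already displayed in the proof of Proposition~\ref{pr:final}. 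The expected dimension is exactly the Euler characteristic $\chi$ of the appropriate normal bundle, and the lower bound on $\dim V_1$ comes from the general principle that the dimension of a deformation space is at least $h^0(N_g') - h^1$ of obstructions, which reduces to $\chi$ when one can argue that the obstruction space only cuts things down by the expected amount.

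The key computation I would carry out is to identify $\operatorname{expdim} = -K_{\Sigma_1}\cdot \C + g - 1 - \sum_{i=1}^n(\mu_i-1)$ with a cohomological Euler characteristic. By adjunction and Riemann--Roch on the smooth curve $N$ of genus $g$, the degree of the normal bundle $N_g$ (the image of $g^\star T_{\Sigma_1}$ modulo $T_N$) is $-K_{\Sigma_1}\cdot \C + 2g - 2$, so $\chi(N_g) = \deg N_g - g + 1 = -K_{\Sigma_1}\cdot \C + g - 1$. The correction term $-\sum_{i=1}^n(\mu_i-1)$ arises from imposing the tangency conditions: requiring the pullback $\kappa^{-1}F_0$ to have the fixed profile $\mu=(\mu_1,\dots,\mu_n)$ imposes $\sum_i(\mu_i-1)$ conditions on the deformations (one forces a contact of order $\mu_i$ at each of the $n$ points $q_i$, which is $\mu_i - 1$ conditions beyond simple incidence). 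Passing to the subsheaf $N_g'$ of the normal sheaf consisting of deformations preserving these tangencies lowers $\chi$ by exactly $\sum_i(\mu_i-1)$, matching the stated $\operatorname{expdim}$.

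The forward-looking structure of the argument is then: $\dim V_1 \ge \dim_{[(N,g)]} \operatorname{Def}(N,g;\text{tangencies}) \ge h^0(N,N_g') - h^1(N,N_g') \ge \chi(N,N_g') = \operatorname{expdim}$, where the first inequality records that $V_1$ is (a quotient by a finite group of) the relevant deformation space and the final equality is the Riemann--Roch computation above. Crucially this is only a \emph{lower} bound, so I do not need the deformation problem to be unobstructed; I only need that the germ of $V_1$ has dimension at least $h^0 - h^1$, which is the standard semicontinuity estimate $\dim \ge \dim(\text{tangent space}) - \dim(\text{obstruction space})$ valid for any scheme cut out in a smooth ambient space.

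The main obstacle I anticipate is handling the tangency conditions rigorously rather than by the heuristic ``each raises codimension by $\mu_i-1$.'' One must verify that the conditions defining the contact profile $\mu$ along $F_0$ genuinely cut out a subscheme whose local defining equations number at most $\sum_i(\mu_i-1)$, so that the expected-dimension count is a valid lower bound even at non-generic or obstructed points of $V_1$; equivalently, one identifies the correct twisted-down normal sheaf $N_g'$ (the kernel of the restriction of $N_g$ to the scheme-theoretic fiber data over $F_0$) and checks its Euler characteristic. Once $N_g'$ is correctly pinned down via the sequence of Proposition~\ref{pr:final}, where $\operatorname{Ker}\alpha = g^\star\OO_{\Sigma_1}(L+\pi^{-1}(p))$ already isolates the geometry of the two disjoint sections, the Euler-characteristic bookkeeping is routine and I would not grind through it in detail.
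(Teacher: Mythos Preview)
Your proposal is correct and follows essentially the same route as the paper: pass to the normalization $N\to\Sigma_1$, identify the local dimension of $V_1$ with that of the deformation space of this map, bound below by $h^0(N_g)-h^1(N_g)=\chi(N_g)=-K_{\Sigma_1}\cdot\C+g-1$ via Riemann--Roch, and then subtract at most $\sum_i(\mu_i-1)$ for the tangency constraint along $F_0$. The only point the paper makes explicit that you gloss over is the equinormalizability of the family over a neighborhood of a general point of $V_1$ (they cite \cite{KlSh}, Lemma~A.3), which is what justifies identifying $V_1$ locally with the deformation space of the map; your closing appeal to the $\operatorname{Ker}\alpha$ computation from Proposition~\ref{pr:final} is unnecessary here, as that kernel concerns the fibers of the branching morphism rather than the tangency subsheaf.
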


\begin{proof}
Let $o\in V_1$ be a general point,  $\C_o$ be the corresponding curve. By \cite{KlSh} Lemma A.3 there exists a  neighborhood  $W$ of $o\in V_1$ over which the family $\C_W\to W$ is equinormalizable, i.e. if $\widetilde \C_W\to \C_W$ is the normalization then $\forall a\in W, (\widetilde \C_W)_a\to (\C_W)_a=\C_a$ is the normalization. Thus $\dim V_1$ is equal to the dimension of (a component of) the deformation space of $f:\widetilde \C_0\to \Sigma_1$ satisfying condition (iii). 
 Notice that condition (iii) has codimension $\le \sum_{i=1}^n(\mu_i-1)$ in the space of all deformations of the pair $(\tilde \C_0,f_0)$. Thus, it suffices to show that (any component of) $Def(\tilde \C_0,f_0)$ has dimension at least $-K_{\Sigma_1}\cdot \C+g-1$. By the standard deformation theory any component of the latter space has dimension $\ge \dim Def^\prime(\tilde \C_0,f_0)-\dim Ob(\tilde C_0,f_0)$. In our case $Def^\prime(\tilde \C_0,f_0)=H^0(\tilde \C_0, N_{f_0})$ and $Ob(\tilde \C_0,f_0)=H^1(\tilde \C_0, N_{f_0})$ where $N_{f_0}$ is the normal sheaf of $f_0$, i.e. $N_{f_0}=Coker (T_{\tilde \C_0}\to f_0^*T_{\Sigma_1})$. This implies the statement since $h^0(\tilde \C_0,N_{f_0})-h^1(\tilde C_0,N_{f_0})=\chi(\tilde C_0,N_{f_0})=-K_{\Sigma_1}\cdot \C+g-1$ by  Riemann-Roch's theorem.
\end{proof}

\begin{lemma}\label{lm:2}
$\dim V_1 \le \rm{\text{exp}}\dim V_1$. 
\end{lemma}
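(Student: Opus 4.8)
The plan is to bound $\dim V_1$ from above by the dimension of the Zariski tangent space to $V_1$ at a general point $o$, to identify this tangent space with the space of global sections of a suitably twisted normal sheaf, and to prove that the twisted sheaf has vanishing first cohomology. Continuing with the setup of Lemma~\ref{lm:1}, let $o\in V_1$ be general, let $\C_0$ be the corresponding curve, let $f_0:\tilde\C_0\to\Sigma_1$ be the composition of the normalization $\tilde\C_0\to\C_0$ with the inclusion, and let $N_{f_0}=\mathrm{Coker}(T_{\tilde\C_0}\to f_0^*T_{\Sigma_1})$ be the normal sheaf. By the equinormalizability over a neighborhood of $o$ (Lemma A.3 of \cite{KlSh}, used already in Lemma~\ref{lm:1}), $\dim V_1$ equals the dimension of the component through $f_0$ of the space of deformations of the map $f_0$ into $\Sigma_1$ preserving both the geometric genus and condition (iii). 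At a general point $o$ the reduced variety $V_1$ is smooth, so $\dim V_1=\dim T_oV_1$, and the plan is to establish the inclusion $T_oV_1\subseteq H^0(\tilde\C_0,M)$, where $M:=N_{f_0}\bigl(-\sum_{i=1}^n(\mu_i-1)q_i\bigr)$ is the normal sheaf twisted down by the contact defect along $F_0$.

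Granting this inclusion, the upper bound reduces to $h^0(\tilde\C_0,M)\le \mathrm{exp}\dim V_1$, which I will obtain by proving $h^1(\tilde\C_0,M)=0$ and applying Riemann--Roch. First I would record $\deg N_{f_0}$: from $0\to T_{\tilde\C_0}\to f_0^*T_{\Sigma_1}\to N_{f_0}\to 0$ one gets $\deg N_{f_0}=-K_{\Sigma_1}\cdot\C-(2-2g)=-K_{\Sigma_1}\cdot\C+2g-2$, and a direct intersection computation on $\Sigma_1$ with $\C=dL+lF=(d+l)L-lE$ and $-K_{\Sigma_1}=3L-E$ gives $-K_{\Sigma_1}\cdot\C=3d+2l$. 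Since $\sum_{i=1}^n(\mu_i-1)=d-n$, this yields $\deg M=2d+2l+2g-2+n$, hence $\chi(M)=\deg M-g+1=2d+2l+g-1+n$, which indeed equals $\mathrm{exp}\dim V_1=-K_{\Sigma_1}\cdot\C+g-1-\sum_i(\mu_i-1)$. By Serre duality $h^1(M)=h^0(K_{\tilde\C_0}\otimes M^{\vee})$, and $\deg(K_{\tilde\C_0}\otimes M^{\vee})=(2g-2)-\deg M=-2d-2l-n<0$; since $\tilde\C_0$ is irreducible this forces $h^0(K_{\tilde\C_0}\otimes M^{\vee})=0$, so $h^1(M)=0$ and $h^0(M)=\chi(M)=\mathrm{exp}\dim V_1$.

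The main obstacle is the first step: rigorously establishing $T_oV_1\subseteq H^0(\tilde\C_0,M)$ with exactly this twist. This requires, on the one hand, that the equigeneric (genus-preserving) first-order deformations of $f_0$ be captured by sections of the locally free normal sheaf, and, on the other hand, that condition (iii) cut this space down by precisely the contact defect $\sum_i(\mu_i-1)$ at the points $q_i$ (the unmarked tangency conditions along $F_0$, whose images are free to move). When the general member $\C_0$ is nodal away from $p$, the map $f_0$ is an immersion, $N_{f_0}$ is a line bundle, and the identification is standard; the delicate point is either to exclude worse singularities on a general member of $V_1$, or to carry out the estimate with the equigeneric normal sheaf $N_{f_0}/\mathrm{torsion}$, whose degree must then be adjusted by the local $\delta$-invariants. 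Once the tangent-space inclusion is secured, the cohomological vanishing is robust, resting only on the positivity $-K_{\Sigma_1}\cdot\C=3d+2l$ dominating the tangency defect $d-n\le d-1$, which keeps the Serre-dual line bundle of strictly negative degree. Combined with Lemma~\ref{lm:1}, this gives $\dim V_1=\mathrm{exp}\dim V_1$ and also shows that the deformations are unobstructed.
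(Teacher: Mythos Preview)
Your route is genuinely different from the paper's. The paper argues by contradiction in two lines: if $\dim V_1>\text{exp}\dim$, then for some configuration of the $n$ contact points on $F_0$ the sublocus $\{\C\in V_1 : \C\cap F_0=\text{fixed}\}$ has dimension exceeding $-K_{\Sigma_1}\cdot\C+g-1-\sum_i(\mu_i-1)-n=-K_{\Sigma_1}\cdot\C+g-1-F_0\cdot\C$, and this contradicts Lemma~2.9 of \cite{Tyo}, which bounds families of reduced irreducible curves on $\Sigma_n$ with prescribed intersection cycle with a fixed curve and requires no hypothesis on the singularities of the general member. All the hard work is absorbed into that citation; what it buys is precisely that one never has to know anything about the general $\C_o$ beyond reducedness and irreducibility.

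Your direct approach---bound $T_oV_1$ by $h^0(M)$ and kill $h^1(M)$ via Serre duality---is natural, and your Riemann--Roch bookkeeping is correct \emph{when $f_0$ is an immersion}. But the gap you flag is more serious than your closing paragraph allows: the vanishing $h^1(M)=0$ is not robust once $N_{f_0}$ has torsion. If $T\subset N_{f_0}$ has length $t$, then $h^1(M)=h^1(M/T)$, and the Serre dual of the line bundle $M/T$ has degree $-2d-2l-n+t$, which is negative only when $t<2d+2l+n$. You supply no independent bound on $t$; obtaining one is tantamount to controlling the singularities of the general member of $V_1$, which is precisely what one does not yet know at this stage (nodality of the general member is a \emph{consequence} of the equality $\dim V_1=\text{exp}\dim$, not an input to it). Replacing $M$ by its torsion-free quotient does not rescue the count either: Riemann--Roch then reads $h^0=\text{exp}\dim-t+h^1$, and the same vanishing is still needed. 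The paper's appeal to \cite{Tyo} is exactly what sidesteps this circularity.
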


\begin{proof}
If $\dim V_1> \rm{\text{exp}}\dim$ then there exists a configuration of $r$ points on $F_0$ such that $\{\C\in V_1| \C\cap F_0=\text{given configuration}\}$ has dimension greater than $-K_{\Sigma_1}\cdot \C +g-1-\sum_{i=1}^n(\mu_i-1)-n=-K_{\Sigma_1}\cdot \C+g-1-F_0\cdot \C$, which is a contradiction with \cite{Tyo}, Lemma 2.9. 
\end{proof}

\begin{corollary}\label{cor:M(g,mu)} Given $g,\mu$ as above, 
\begin{equation}\label{eq:M}
M_{g,\mu}=\max{\left(0,\left\lceil \frac{g-d+2}{2}\right\rceil \right)}.
\end{equation}

\noindent
In particular, $m_{g,\mu}=M_{g,\mu}=0$ if and only if $d=\sum_{i=1}\mu_i\ge g+2$. 
\end{corollary}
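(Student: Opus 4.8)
The plan is to read $M_{g,\mu}$ off directly from the dimension formula in Theorem~\ref{th:dimmu}, in exact parallel with the proof of Corollary~\ref{cor:M(g,d)}. By definition $M_{g,\mu}$ is the index at which the filtration \eqref{filtrmu} stabilizes, i.e. the smallest $l\ge m_{g,\mu}$ for which $\dim\HH_{g,\mu}^l$ attains its maximal value $2d+2g-2-\sum_{i=1}^n(\mu_i-1)$. So I would simply determine the threshold at which the first argument of the minimum in \eqref{eq:dimen} catches up with the second.

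The key observation is that the correction term $\sum_{i=1}^n(\mu_i-1)$ occurs in \emph{both} entries of the minimum in \eqref{eq:dimen}, and hence cancels when the two are compared. Concretely,
$$
3d+g+2l-4-\sum_{i=1}^n(\mu_i-1)\ \ge\ 2d+2g-2-\sum_{i=1}^n(\mu_i-1)\ \Longleftrightarrow\ 2l\ge g-d+2,
$$
which is precisely the inequality appearing in the proof of Corollary~\ref{cor:M(g,d)}. Thus the smallest admissible $l$ is again $\max\!\left(0,\left\lceil\frac{g-d+2}{2}\right\rceil\right)$, giving \eqref{eq:M} and exhibiting the fact that $M_{g,\mu}=M(g,d)$ is insensitive to the profile $\mu$.

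For the final equivalence I would argue as follows. From the displayed formula $M_{g,\mu}=0$ holds exactly when $g-d+2\le 0$, i.e. when $d=\sum_{i=1}^n\mu_i\ge g+2$. Since \eqref{filtrmu} is an increasing chain of strata running from the bottom index $m_{g,\mu}$ up to the top index $M_{g,\mu}$, we always have $m_{g,\mu}\le M_{g,\mu}$; consequently $M_{g,\mu}=0$ forces $m_{g,\mu}=0$ as well, while the converse is immediate. This yields $m_{g,\mu}=M_{g,\mu}=0$ if and only if $d\ge g+2$.

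The only point requiring a moment of care, and the closest thing to an obstacle, is the legitimacy of invoking Theorem~\ref{th:dimmu} at the saturation value of $l$: formula \eqref{eq:dimen} is asserted only for $l\ge m_{g,\mu}$, so one must know that the $l$ computed above is indeed $\ge m_{g,\mu}$. This is automatic, since $M_{g,\mu}$ by construction sits at the top of the genuine filtration \eqref{filtrmu}, which begins at $m_{g,\mu}$; no separate estimate is needed. Apart from this bookkeeping the statement is a direct transcription of the simple-Hurwitz case, the essential input being the cancellation of $\sum_{i=1}^n(\mu_i-1)$ noted above.
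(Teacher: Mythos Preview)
Your proposal is correct and follows exactly the paper's own approach: the paper's proof simply reads ``See the proof of Corollary~\ref{cor:M(g,d)}'', and what you have written is precisely that argument, made explicit via the cancellation of $\sum_{i=1}^n(\mu_i-1)$ in the two terms of \eqref{eq:dimen}.
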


\begin{proof} See the proof of Corollary~\ref{cor:M(g,d)}. 

\end{proof} 

Stratification~\eqref{filtr} is (almost) the special case of \eqref{filtrmu} the difference being that one simple branching point is placed at $\infty$.

\medskip \noindent
{\bf Remark.}  According to the information the author obtained from I.~Tyomkin one can prove that each stratum $\HH_{g,\mu}^l$ is irreducible for $g=0$ and $g=1$, and hopefully for other genera if $\mu \vdash d$ is  not very complicated. Whether  $\HH_{g,\mu}^l$ is irreducible for an arbitrary partition $\mu$
 is unknown at present and might be a difficult problem.

\section{ Hurwitz numbers of the planarity stratification and Zeuten-type problems} 

Due to irreducibility of strata of  \eqref{filtr} and equidimensionality of strata of \eqref{filtrmu}  we can introduce the correspoding notion of Hurwitz numbers related to these strata. Recall that the {\it branching morphism} 
\begin{equation}\label{eq:branch}
\delta_{g,d}: \HH_{g,d}\to \CP^{2d+2g-2}\setminus \Delta 
\end{equation}
is, by definition, the map sending a meromorphic function $f$ to the unordered set of its branching points (which are distinct by definiton). Here $\Delta\subset \CP^{2d+2g-2}$ is the hypersurface of unordered 
$(2d+2g-2)$-tuples of points in $\CP^1$ where not all of them are pairwise distinct. It is well-known that $\delta_{g,d}$ is a finite  covering and its degree $h_{g,d}$ is called the (small) Hurwitz number.  In particular, for $g=0$ the corresponding Hurwitz number $h_{0,d}$ equals $(2d-2)!d^{d-3}$.  In general, however closed formulas for $h_{g,d}$ (as well as for many other Hurwitz numbers) are unknown. 

Analogously, the  {\it branching morphism} 
\begin{equation}\label{eq:branch}
\delta_{g,\mu}: \HH_{g,\mu}\to \bC^{w_\mu}\setminus \Delta 
\end{equation}
is, by definition, the map sending a meromorphic function $f\in\HH_{g,\mu}$ to the unordered set of its simple branching points (which are distinct by definiton). Here $\Delta\subset \bC^{w_\mu}$ is the hypersurface of unordered 
$w_\mu$-tuples of points in $\bC$ where not all of them are pairwise distinct. Here $w_\mu=2d+2g-2-\sum_{i=1}^n(\mu_i-1)$. It is well-known that $\delta_{g,\mu}$ is a finite  covering and its degree $h_{g,\mu}$ is called the single Hurwitz number.  In particular, for $g=0$ the corresponding Hurwitz number $h_{0,\mu}$ equals 
$$(d+n-2)!\prod_{i=1}^n \frac{\mu_i^{\mu_i}}{\mu_i!}d^{n-3}.$$  

Stratifications~\eqref{filtr} -- \eqref{filtrmu} allow to introduce Hurwitz numbers which take into account these filtrations. Before we introduce this notion in general, let us start with a motivating example. 

\medskip
\noindent
{\bf Example.}
Fixing a point $p\in \CP^2$, consider the space $S_{d,p}$ of all smooth plane curves of degree $d$ not passing through $p$. Each such curve defines a branched covering of $\CP^1$ of degree $d$. 
There exists a three-dimensional group $G_p\subset PGL_3$ of projective transformations preserving $p$ as well as the pencil of lines through $p$. In other words, each line through $p$ will be mapped to itself. Obviously $G_p$ acts (locally) freely on $S_{d,p}$ for $d>1$ and curves from the same orbit define equivalent branched coverings of $\CP^1$. The following simple statement holds.

%Fix a point $p\in \CP^2$. We will identify $\CP^1$ with the set of
%lines passing through $p$. If $C\subset \CP^2$ is a curve not
%containing $p$, then the projection $\pi_C\colon C\to \CP^1$ (projection from $p$) is well defined.
 As usual, two mappings
$p_1\colon C_1\to \CP^1$ and $p_2\colon C_2\to \CP^1$ are called
\emph{equivalent} if there exists an isomorphism $f\colon C_1\to C_2$
s.t.\ $p_2\circ f=p_1$.

%We denote by $G_p\subset PGL_3(\C)$ the group of authomorphisms of
%$\CP^2$ fixing $p$ and preserving each line passing through~$p$.

\begin{proposition}\label{pr:smooth}
Suppose that $C_1,C_2\subset\CP^2$ are smooth projective curves of degree at least $4$ not
passing through $p$. Then the morphisms $\pi_{C_1}$ and $\pi_{C_2}$
are equivalent if and only if there exists an automorphism $f\in G_p$
s.t.\ $f(C_1)=C_2$.
\end{proposition}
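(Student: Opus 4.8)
The plan is to prove the two implications separately; the \emph{if} direction is immediate, while the \emph{only if} direction requires reconstructing a projective transformation out of an abstract isomorphism. For the \emph{if} direction, suppose $f\in G_p$ with $f(C_1)=C_2$. Since $f$ fixes $p$ and carries every line through $p$ to itself, for $x\in C_1$ the line joining $p$ to $f(x)$ coincides with the line joining $p$ to $x$; hence $\pi_{C_2}\circ(f|_{C_1})=\pi_{C_1}$, and $f|_{C_1}\colon C_1\to C_2$ is the required equivalence. For the converse I would start from an isomorphism $\phi\colon C_1\to C_2$ with $\pi_{C_2}\circ\phi=\pi_{C_1}$ and aim to produce $F\in G_p$ with $F|_{C_1}=\phi$.

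The crucial step is to identify the hyperplane bundles under $\phi$. The fibers of $\pi_{C_i}$ are precisely the divisors $C_i\cap L$ cut by lines $L$ through $p$, and because $p\notin C_i$ this pencil is base-point-free with associated line bundle $\OO_{C_i}(1)$. The hypothesis $\pi_{C_2}\circ\phi=\pi_{C_1}$ says that $\phi$ carries the fiber over $t$ to the fiber over the \emph{same} $t$, so it matches the two pencils divisor by divisor, which forces $\phi^\star\OO_{C_2}(1)\cong\OO_{C_1}(1)$. Alternatively, for $d\ge 4$ one may invoke the classical completeness and uniqueness of the $g_d^2$ cut out by lines (used in the proof of Theorem~\ref{th:dim}) to obtain the same identification directly from the abstract isomorphism.

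Next I would reconstruct $F$. For $d\ge 2$ the restriction map $H^0(\CP^2,\OO(1))\to H^0(C_i,\OO_{C_i}(1))$ is an isomorphism onto a $3$-dimensional space, so each $C_i$ is embedded by its complete hyperplane system. The bundle isomorphism above induces a linear isomorphism $\phi^\star\colon H^0(C_2,\OO_{C_2}(1))\to H^0(C_1,\OO_{C_1}(1))$, well defined up to scalar; transporting it through the two restriction isomorphisms yields an element $F\in PGL_3$, and functoriality of the embedding by a complete linear system gives $F(C_1)=C_2$ and $F|_{C_1}=\phi$. To see that $F\in G_p$, note first that the matching of pencils makes $F^\star$ preserve the hyperplane $\{\ell:\ell(p)=0\}$ of linear forms vanishing at $p$, so the dual point is fixed, i.e.\ $F(p)=p$, and $F$ permutes the lines through $p$, inducing an automorphism of the pencil $\cong\CP^1$. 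Finally, $F|_{C_1}=\phi$ together with $\pi_{C_2}\circ\phi=\pi_{C_1}$ forces the line through $p$ and $F(x)$ to equal the line through $p$ and $x$ for every $x\in C_1$; since $\pi_{C_1}$ is surjective onto the pencil, this automorphism fixes every point, hence is the identity, so $F$ fixes each line through $p$ and lies in $G_p$.

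The main obstacle is the line-bundle identification $\phi^\star\OO_{C_2}(1)\cong\OO_{C_1}(1)$: an abstract isomorphism of curves need not respect the hyperplane class, since a priori it could differ by a torsion twist in $\mathrm{Pic}$, and it is exactly here that the geometry of the projection enters — either through base-point-freeness coming from $p\notin C_i$, or equivalently through the hypothesis $d\ge 4$ via the uniqueness of the $g_d^2$. Everything after that identification is formal.
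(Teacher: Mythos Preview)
Your proof is correct and follows essentially the same route as the paper's: both identify the hyperplane bundles via the matched pencils through $p$, extend $\phi$ to some $F\in PGL_3$ using that smooth plane curves are embedded by the complete linear system of plane sections, and then verify $F\in G_p$. The only cosmetic difference is in the last step: the paper shows $F(\ell)=\ell$ directly (a generic line $\ell\ni p$ meets $C_1$ in $\deg C_1\ge 2$ points, which $F$ sends back into $\ell$, so the image line must be $\ell$) and deduces $F(p)=p$ as $F(\ell_1\cap\ell_2)=\ell_1\cap\ell_2$, whereas you first obtain $F(p)=p$ via the dual hyperplane of forms vanishing at $p$ and then use surjectivity of $\pi_{C_1}$ to fix each line.
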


\begin{proof}
The 'if' part being obvious, suppose that  $\pi_{C_1}$ and $\pi_{C_2}$
are equivalent and that this equivalence is performed by the
isomorphism $f\colon C_1\to C_2$. For each line $\ell\ni p$, the
isomorphism $f$ maps $C_1\cap \ell$ to $C_2\cap\ell$; thus, $f$ maps
hyperplane sections of $C_1$ to hyperplane sections of $C_2$. Since
both $C_1$ and $C_2$ are embedded in $\CP^2$ by the complete linear
system of plane sections, this implies that $F$ is induced by a
projective automorphism $F\in PGL_3$. It remains to check that
$F\in G_p$; to that end, consider a generic $\ell\ni p$; this line
intersects $C$ at $M=\deg C>1$ points $p_1,\dots,p_m$, and these
points are mapped by $F$ to $m$ distinct points on $\ell$. So,
$F(\ell)=\ell$ for the generic (whence for any) $\ell\ni p$. If
$\ell_1,\ell_2\ni p$, then
\[
F(p)=F(\ell_1\cap \ell_2)=F(\ell_1)\cap F(\ell_2)=\ell_1\cap \ell_2=p,
\]
which completes the proof.
\end{proof}

Denote by $\mathfrak h_d$ the number of different $3$-dimensional orbits of the above action on the space $S_{d,p}$ witth the same set of $d(d-1)$ tangent lines (e.g. branching points of the projection).

\noindent
{\bf Example.} One can easily observe that $\mathfrak h_2=2,\; \mathfrak h_3=40$ (which are the usual Hurwitz numbers for degree $d$ and genus $\binom {d-1}{2}$. But starting with $d=4$ the situation changes. So far the only calculated non-trivial example is $d=4$ see \cite{Va}, \cite {Va2}  for which $\mathfrak h_4=120\times (3^{10}-1)$. Numbers $\mathfrak h_d$ for $d>4$ are unknown at present. 

Observe a straight-forward analogy of the calculation of  $\mathfrak h_d$ with (a special case) of the classical Zeuten's problem, see \cite{Ze}, \cite{Al}. Namely, given $d\ge 2$ and  $0\le k\le \frac{d(d+3)}{2}$ define the number $N_k(d)$  as the number of smooth curves of degree $d$ passing through $k$ points in general position and tangent to  $\frac{d(d+3)}{2}-k$ lines in general position. In \cite{Ze} H.~G.~Zeuten predicted these numbers for $d$ up to $4$. His predictions were rigorously proven only in the 90's, see \cite{Al} and references therein.   The above problem of calculation of $\mathfrak h_d$ is similar to Zeuten's problem for $k=\frac{d(d+3)}{2}$. But instead of taking $\frac{d(d+3)}{2}$ generic lines we should take $\frac{d(d+3)}{2}-3$ generic lines through a given point $p$ and count the number of $3$-dimensional orbits under the action of $G_p$. 

\begin{definition} Introduce the Hurwitz number $\mathfrak h_{g,\mu}^l$ as the degree of the restriction of the morphism $\delta_{g,\mu}$ to the (irreducible component of the) stratum 
$\HH_{g,\mu}^l$ where $m(g,\mu)\le l\le M(g,\mu)$. 
\end{definition} 

By definition, $\mathfrak h_{g,\mu}^{M(g,\mu)}=h_{g,\mu}$. Also the number $\mathfrak h_d$ introduced above equals $\mathfrak h_{(d-1)(d-2)/2, 1^d}^0$.

\section {Final Remarks}

\medskip\noindent 
{\bf 1.} It would be very interesting to prove/disprove the irreducibility of the strata $\HH_{g,\mu}^l$.

\medskip\noindent 
{\bf 2.} It is important to develop  tools helping for calculation of  the Hurwitz numbers of $\HH_{g,d}^l$ and/or $\HH_{g,\mu}^l$ due to the fact that they are  naturally related  to Zeuten-type problems.  In the case of the usual single Hurwitz numbers there exists a standard combinatorial approach to the calculation of those which is not always very useful for practical computations but is very important theoretically. Other standard tools for the usual Hurwitz numbers are the cut-and-join equation, see e.g. \cite{GJV} and the ELSV-formula, see e.g. \cite{ELSV}. It might be possible to find analogs of the latter tools by using an appropriate compactification of the above strata similar to those already existing in the literature. 

\medskip\noindent 
{\bf 3.} Another approach to the calculation of the Hurwitz strata of the planarity filtration might come from the correspondence theorem in tropical algebraic geometry. Recently in \cite {BBM} the authors developed some tropical tools for finding the answers to a similar class of Zeuten-type problems. 

\medskip\noindent 
{\bf 4.} Finally, we want to mention a recent preprint  \cite{BuLv} which gives a criterion when meromorphic functions of degree $d$ on a certain class of plane curves of degree $d$ with only  nodes and some additional non-degeneracy assumptions might be realized by a projection from a point outside the curve.

\medskip 
\noindent 
{\bf Acknowledgements.}  We want to thank   O.~Bergvall,  S.~Shadrin for discussions, and especially  I.~Tyomkin for his explanations of \cite{Tyo} and his help with the proofs of Theorem~\ref{th:dim} and Theorem~\ref{th:dimmu}.

\end{document}